\documentclass{amsart}
\usepackage{amsmath}
\usepackage{multicol}
\usepackage{amssymb}
\usepackage{amsfonts}
\usepackage{amsthm}
\usepackage{enumerate}
\usepackage{tagging}
\usepackage[margin=1.5in]{geometry}
\usepackage{mathtools}
\usepackage{graphicx}
\usepackage{mathrsfs}
\usepackage{hyperref}
\usepackage[table,xcdraw]{xcolor}
\usepackage[T1]{fontenc}
\usepackage[utf8]{inputenc}
\usepackage{graphicx}
\usepackage{float}
\usepackage{tikz-cd}

\usepackage{enumitem}

\usepackage{blkarray,booktabs}

\usepackage{dsfont}

\usepackage{xcolor}

\usepackage{siunitx}
\usepackage[e]{esvect}
\usepackage{bbm}
\usepackage{bm}
\usepackage[scaled]{beramono}
\usepackage{cleveref}
 \usepackage{mathrsfs}
 \usepackage{enumitem}
\usepackage{fancyhdr}

%  \pagestyle{fancy}
% \fancyhead{}
% \renewcommand{\headrulewidth}{0pt}
 
\DeclareFontFamily{U}{wncy}{}
    \DeclareFontShape{U}{wncy}{m}{n}{<->wncyr10}{}
    \DeclareSymbolFont{mcy}{U}{wncy}{m}{n}
    \DeclareMathSymbol{\Sh}{\mathord}{mcy}{"58} 

\hypersetup{colorlinks=true,citecolor=NavyBlue,linkcolor=BrickRed,urlcolor=Green}

\hypersetup{
    colorlinks,
    linkcolor={red!50!black},
    citecolor={blue!50!black},
    urlcolor={blue!80!black}
}

\def\bal#1\nal{\begin{align*}#1\end{align*}}
\def\ball#1\nall{\begin{align}#1\end{align}}
\def\lbal#1\lnal{\begin{flalign*}#1\end{flalign*}}

\def\BAL#1\NAL{\[
\resizebox{\textwidth}{!}{$#1$}
\]}

\makeatletter
\renewcommand*\env@matrix[1][\arraystretch]{%
  \edef\arraystretch{#1}%
  \hskip -\arraycolsep
  \let\@ifnextchar\new@ifnextchar
  \array{*\c@MaxMatrixCols c}}
\makeatother

\newcommand{\ep}{t_b}
\newcommand{\epd}{t_d}
\newcommand{\etad}{s_d}

\newcommand{\eq}{\overset{(2)}{\equiv}}

\newcommand{\eqq}{\overset{(4)}{\equiv}}
\newcommand{\peqq}{\overset{(\t^4)}{\equiv}}
\newcommand{\eqqq}{\overset{(\t^5)}{\equiv}}

\newcommand{\eqqqq}{\overset{(\t^7)}{\equiv}}
\newcommand{\eqqqqq}{\overset{(\t^9)}{\equiv}}

\newcommand{\Eq}{\eq}

\newcommand{\Eqq}{\eqq}
\newcommand{\Peqq}{\peqq}
\newcommand{\Eqqq}{\eqqq}
\newcommand{\Eqqqq}
{\eqqqq}

\newcommand{\Eqqqqq}{\eqqqqq}

\newcommand{\mm}{\text{ (mod }2)}

\newcommand{\MM}{\text{ (mod }\mathfrak{t}^5)}
\newcommand{\MMM}{\text{ (mod }\mathfrak{t}^7)}

\newcommand{\noi}{\noindent}

\newcommand{\C}{\mathbb{C}}

\newcommand{\Z}{\mathbb{Z}}
\newcommand{\Q}{\mathbb{Q}}

\newcommand{\Ok}{\Z[i]}

\newcommand{\vpb}{\vp(\b)}
\newcommand{\vpd}{\vp(\d)}

\newcommand{\OKp}{\Z[i]_\t}
\newcommand{\Deg}{\text{deg}^{(1)}}
\newcommand{\Degg}{\text{deg}^{(2)}}
\newcommand{\Deggg}{\text{deg}^{(3)}}
\newcommand{\Degggg}{\text{deg}^{(1,3)}}

\newcommand{\raa}{$\implies$}
\newcommand{\laa}{$\impliedby$}

\newcommand{\goto}{\rightarrow}
\newcommand{\hsp}{\hspace{.14cm}}

\newcommand{\modd}{\text{ (mod }}

\theoremstyle{definition}

\newtheorem{thm}{Theorem}[section]

\newtheorem{defn}[thm]{Definition}
\newtheorem{prop}[thm]{Proposition}
\newtheorem{lem}[thm]{Lemma}
\newtheorem{cor}[thm]{Corollary}
\newtheorem{rem}[thm]{Remark}

\numberwithin{equation}{section}

\newcommand{\p}{\mathfrak{p}}

\newcommand{\q}{\mathfrak{q}}

\newcommand{\vp}{\text{v}_{\t}}

\newcommand{\vpz}{\text{v}_p(z)}

\makeatletter
\def\legg@dash#1#2{\hb@xt@#1{%
  \kern-#2\p@
  \cleaders\hbox{\kern.5\p@
    \vrule\@height.2\p@\@depth.2\p@\@width\p@
    \kern.5\p@}\hfil
  \kern-#2\p@
  }}
\def\@legg#1#2#3#4#5{\mathopen{}\left[
  \sbox\z@{$\genfrac{}{}{0pt}{#1}{#3#4}{#3#5}$}%
  \dimen@=\wd\z@
  \kern-\p@\vcenter{\box0}\kern-\dimen@\vcenter{\legg@dash\dimen@{#2}}\kern-\p@
  \right]_4\mathclose{}}
\newcommand\legg[2]{\mathchoice
  {\@legg{0}{1}{}{#1}{#2}}
  {\@legg{1}{.5}{\vphantom{1}}{#1}{#2}}
  {\@legg{2}{0}{\vphantom{1}}{#1}{#2}}
  {\@legg{3}{0}{\vphantom{1}}{#1}{#2}}
}
\def\dlegg{\@legg{0}{1}{}}
\def\tlegg{\@legg{1}{0.5}{\vphantom{1}}}
\makeatother

\makeatletter
\def\leggg@dash#1#2{\hb@xt@#1{%
  \kern-#2\p@
  \cleaders\hbox{\kern.5\p@
    \vrule\@height.2\p@\@depth.2\p@\@width\p@
    \kern.5\p@}\hfil
  \kern-#2\p@
  }}
\def\@leggg#1#2#3#4#5{\mathopen{}\left[
  \sbox\z@{$\genfrac{}{}{0pt}{#1}{#3#4}{#3#5}$}%
  \dimen@=\wd\z@
  \kern-\p@\vcenter{\box0}\kern-\dimen@\vcenter{\leggg@dash\dimen@{#2}}\kern-\p@
  \right]_4^2\mathclose{}}
\newcommand\leggg[2]{\mathchoice
  {\@leggg{0}{1}{}{#1}{#2}}
  {\@leggg{1}{.5}{\vphantom{1}}{#1}{#2}}
  {\@leggg{2}{0}{\vphantom{1}}{#1}{#2}}
  {\@leggg{3}{0}{\vphantom{1}}{#1}{#2}}
}
\def\dleggg{\@leggg{0}{1}{}}
\def\tleggg{\@leggg{1}{0.5}{\vphantom{1}}}
\makeatother

\renewcommand{\b}{b}
\renewcommand{\d}{d}
\newcommand{\F}{\mathbb{F}}

\newcommand{\vth}{\text{v}_\q}

\newcommand{\SelEQ}{\text{S}^{(\varphi)}(E_b/\KK)}
\newcommand{\SelEE}{\text{S}^{(\varphi)}(E_b/\Q(i))}
\newcommand{\phiS}{$\varphi$-Selmer group }

\renewcommand{\t}{\mathfrak{t}}
\renewcommand{\v}{\mathfrak{v}}

\newcommand{\lleg}[2]{\genfrac{(}{)}{}{}{\mspace{3mu} #1 \mspace{3mu}}{\mspace{3mu} #2 \mspace{3mu}}_2}
\newcommand{\llegg}[2]{\genfrac{(}{)}{}{}{\mspace{3mu} #1 \mspace{3mu}}{\mspace{3mu} #2 \mspace{3mu}}_4}
\newcommand{\LEG}[2]{\genfrac{[}{]}{}{}{\mspace{3mu} #1 \mspace{3mu}}{\mspace{3mu} #2 \mspace{3mu}}_4}
\newcommand{\LEGG}[2]{\genfrac{[}{]}{}{}{\mspace{3mu} #1 \mspace{3mu}}{\mspace{3mu} #2 \mspace{3mu}}_4^2}
\newcommand{\LEGGG}[2]{\genfrac{[}{]}{}{}{\mspace{3mu} #1 \mspace{3mu}}{\mspace{3mu} #2 \mspace{3mu}}_2}

\newcommand{\cdd}{C_d}
\newcommand{\KK}{\Q(i)}
\newcommand{\zz}{z}
\newcommand{\ww}{w}
\newcommand{\vpp}{\text{v}_\p}

\newcommand{\EE}{\mathbf{e}}
\newcommand{\edge}[2]{\EE\left(#1, #2\right)}

\newcommand{\WW}{\mathtt{W}}
\newcommand{\ZZ}{\mathtt{Z}}

\newcommand{\BB}{\mathtt{B}}
\newcommand{\vecd}{\vec{\mathbbm{1}}^{(d)}}
\newcommand{\vecyst}{\vec{y}^{\hspace{.05cm}(s_d,t_d)}}
\newcommand{\one}{\mathbbm{1}}
\begin{document}

\title{A graph-theoretic approach to computing Selmer groups of elliptic curves $y^2 = x^3 + bx$ over $\mathbb{Q}(i)$}
\author{Anthony Kling, Ben Savoie}

\begin{abstract} We develop a graph-theoretic algorithm to compute the \(\varphi\)-Selmer group of the elliptic curve \(E_b: y^2 = x^3 + b x\) over \(\mathbb{Q}(i)\), where \(b \in \mathbb{Z}[i]\) and \(\varphi\) is a degree 2 isogeny of \(E_b\). We associate to \(E_b\) a weighted graph \(G_b\), whose vertices are the odd Gaussian primes dividing \(b\), and whose edge weights are determined by the quartic residue symbol between pairs of these primes. By applying our algorithm, we explicitly compute the \(\varphi\)-Selmer group of \(E_b\) when \(b\) is a product of inert primes, and we construct several infinite families of elliptic curves over \(\mathbb{Q}(i)\) with trivial Mordell--Weil rank.
\end{abstract}
\maketitle

\markboth{\textnormal{\footnotesize Anthony Kling, Ben Savoie}}{\textnormal{\footnotesize A graph-theoretic approach to computing Selmer groups of elliptic curves $y^2 = x^3 + bx$ over $\Q(i)$}}
\section{Introduction} \subsection{Background} Throughout this paper, we consider elliptic curves with $j$-invariant 1728 over $\Q(i)$, which can each uniquely be written as \cite[X.5.4]{Sil}
\begin{align}\label{Eb-eqn}
E_b: y^2 = x^3 + b x \quad \text{for some }b \in (\Q(i)^*)/(\Q(i)^*)^4.
\end{align}
Each of these elliptic curves is equipped with a degree 2 isogeny $\varphi: E_b \rightarrow E_{-4b}$, given by 
\begin{align}\label{isog-form}
\varphi(x,y) = \big( y^2 / x^2 , y  ( b-x^2) / x^2 \big),
\end{align}
with kernel $\{(0,0),\mathcal{O}\}$. Associated to this isogeny is the \textit{$\varphi$-Selmer group} of $E_b$, denoted by $\text{S}^{(\varphi)}(E_b/\Q(i))$, which is a finite group that fits into the short exact sequence 
\begin{align}\label{selmer-ses}
0 \longrightarrow E_{-4b}(\Q(i))/\varphi\big(E_b(\Q(i))\big) \longrightarrow \text{S}^{(\varphi)}(E_b/\Q(i)) \longrightarrow \Sh(E_b/\Q(i))[\varphi]\longrightarrow 0.
\end{align}
By descent (\cite[X.4.9]{Sil}), the $\varphi$-Selmer group can be described as\footnote{Note that $C_d(\Q(i)_\v) \neq \emptyset$ always holds when $\v$ is an infinite place, since in this case, $\Q(i)_\v = \C$.}
\begin{align}\label{Selmer-description}
\text{S}^{(\varphi)}(E_b/\Q(i)) \cong \{d \in \Q(i)^* / (\Q(i)^*)^2: d \mid 2b \text{ and } C_d(\Q(i)_{\v}) \neq \emptyset \text{ for all primes }\v\mid 2b\},
\end{align}
where $C_d/\Q(i)$ is the homogeneous space for $E_b/\Q(i)$ given by \footnote{\noindent The homogeneous space is actually a projective curve embedded in $\mathbb{P}^3$. The equation $d w^2 = d^2 - 4 b z^4$ describes this curve on the affine patch $\{X_0 \neq 0\} \subset \mathbb{P}^3$, where $z = X_1/X_0, w = X_2/X_0, z^2 = X_3/X_0$. The full curve $C_d$ consists of the affine curve $\mathcal{C}_d: d w^2 = d^2 - 4 b z^4$ plus two additional points $[0: 0: \pm 2 i \sqrt{b/d}:1]$ at $\{X_0 = 0\}$. For the rest of the paper, we only use $C_d$ to denote the projective curve and $\mathcal{C}_d$ to denote the affine curve $C_d \cap \{X_0\neq 0\}$.\label{Cd-footnote}} 
$$
C_d: dw^2 = d^2 - 4 b z^4. 
$$
The $\varphi$-Selmer group can therefore be used to bound the (Mordell-Weil) rank of $E_b$. In particular, \cite[Lemma 3.1]{Savoie-rank2-1728} shows that
\begin{align}\label{rank-bound}
\text{rk}(E_b / \Q(i) )\leq 2\text{dim}_{\F_2}\text{S}^{(\varphi)}(E_b/ \Q(i))-2.
\end{align}
Moreover, the rank of each $E_b/\Q(i)$ is even since $E_b$ has complex multiplication by $\Z[i]$. The $\Q(i)$-torsion of these curves is also well-understood: 
\begin{thm}[Theorem 2.3, \cite{Savoie-rank2-1728}]\label{thm-torsion}  If $b \in \Z[i] \backslash \{-1, -1\pm 2i\}$ is fourth-power-free, then 
    \bal 
    E_b(\Q(i))_{\text{tors}} \cong \begin{cases}
        \Z/2\Z \oplus \Z/2\Z &\text{if $b$ is square in $\Z[i]$,}\\
        \Z/2\Z &\text{else.}
    \end{cases}
    \nal 
\end{thm}
\subsection{Prior Work of Feng and Xiong, and of Faulkner and James} In 2004 \cite{FX04}, Feng and Xiong determined all elliptic curves $E_{-n^2}: y^2 = x^3 - n^2 x$ over $\Q$ with \textit{minimal $\varphi$-Selmer groups}, which, in this case, means $\text{S}^{(\varphi)}(E_{-n^2}/ \Q) = \{1\}$ and $\text{S}^{(\widehat{\varphi})}(E_{4n^2}/\Q) = \{\pm 1, \pm n\}$. It follows from (\ref{selmer-ses}) that each of these curves has rank 0. 

To each elliptic curve \(E_{-n^2}\), Feng and Xiong associated a simple directed graph \(G_n\) such that the \(\varphi\)-Selmer groups are minimal precisely when \(G_n\) has no \textit{even partitions} and the divisors of \(n\) satisfy certain mod 8 congruence conditions. An \textit{even partition} of a directed graph $G$ is a partition of the vertices $V(G) = V_1 \sqcup V_2$ such that 
\bal 
\#\{v_1 \goto v_2: v_2 \in V_2\}&\equiv 0 \modd 2) \text{ for all }v_1 \in V_1, \text{ and}\\
\#\{v_2\goto v_1: v_1 \in V_1\}&\equiv 0 \modd 2) \text{ for all }v_2 \in V_2.
\nal

If $n$ has prime factorization $n = p_1 \dots p_N$, then the associated directed graph $G_n$ is roughly\footnote{This is precisely the associated graph if $n \equiv 3 \modd 8)$. Otherwise, either $-1$ or $2$ is added as an extra vertex, with directed edges $-1 \to v$, $2\to v$ defined by $v \modd 8)$. See \cite[Section 2]{FX04} for more details.} defined by
\bal 
\text{Vertices of $G_n$} &= V(G_n) = \{p_1,\dots,p_N\}, \\
\text{Directed Edges of $G_n$} &= E(G_n) = \bigg\{p_i \to p_j: \hsp \lleg{p_j}{p_i} = -1, 1 \leq i \neq j \leq N\bigg\}.
\nal 
% Note that we can assume $n$ is not square-free without loss of generality, since (\ref{Eb-eqn}) implies $$E_{-n^2} \cong E_{-(u^2 n)^2} \quad \text{for all }u \in \Q^*.$$
Here $\lleg{\cdot}{\cdot}$ denotes the Legendre symbol, which is defined for a prime $p \in \Z$ and $a \in \Z$ coprime to $p$ by 
\bal 
\lleg{a}{p} = \begin{cases}1 &\text{if }a \equiv x^2 \modd p) \text{ for some }x \in \Z,\\
-1 &\text{else}.
\end{cases}
\nal 

To express these results in the language of linear algebra rather than graph theory, Feng and Xiong translated the condition that \(G_n\) has no even partitions into a condition on the \textit{Laplacian matrix} of $G_n$. This matrix is defined by
$$
L(G_n) = \text{diag}\big(\text{deg}(p_1),\dots,\text{deg}(p_N)\big) - A(G_n),
$$
where $A(G_n)$ denotes the adjacency matrix of $G_n$ and $\text{deg}(p_i)$ denotes the number of edges $p_i \to p_j \in E(G_n)$. Feng and Xiong proved that $G_n$ has no even partitions if and only if this Laplacian has maximal rank; i.e., if and only if $\text{rk}_{\mathbb{F}_2} L(G_n) = N-1$ (note that $\text{rk}_{\Z} L(G_n) <N$ since each row sums to 0). 

 While \cite{FX04} only classified which $E_{-n^2}$ have minimal $\varphi$-Selmer groups, their methods can be extended to give graph-theoretic descriptions of the $\varphi$-Selmer groups even when they are not minimal. In 2007 \cite{FJ07}, Faulkner and James described the size of $\text{S}^{(\varphi)}(E_{-n^2}/\Q)$ in terms of the number of even partitions of the graph $G_n$\footnote{Faulkner and James roughly show that the $\#\text{S}^{(\varphi)}(E_{-n^2}/\Q) = c \cdot \# \{\text{even partitions of }G_{n}\}$, where $c \in \{1,2\}$ depends on $n \modd 8)$. See \cite[Thm 1.9]{FJ07} for details.}. Moreover, Faulkner and James translate this to a linear-algebraic condition as follows (see \cite[Section 5]{FJ07} for precise details):
\begin{enumerate}
    \item Construct an $M\times M$ submatrix $L'_n$ of the $N\times N$ matrix $L(G_n)$, which only depends on $n$.
    \item\label{y-const} To any square-free $d \in \Z$ which divides $2n$, there is an associated partition of $G_n$, $V(G_n) = \{p_i: p_i \mid d\} \sqcup \{p_i: p_i \nmid d\}$, and an associated vector  $\one_{p_i \mid d}\in \mathbb{F}_2^{N}$.
    \item Construct a vector $\vec{y} \in \mathbb{F}_2^M$ (whose entry indexed by $p_i$ depends only on $p_i \modd 8)$) such that
    \bal 
    L'_n \cdot \vec{\one}_{p_i \mid d}{} = \vec{y} &\iff \{p_i: p_i \mid d\} \sqcup \{p_i: p_i \nmid d\} \text{ is an even partition of }G_n\\
    &\iff d \in \text{S}^{(\varphi)}(E_{-n^2}/\Q).
    \nal 
    % $$\vec{y}_{p_i} = \begin{cases}
    %     1 &\text{if }p_i \modd \pm 1 \modd 8),\\
    %     0 &\text{if }p_i \modd \pm 3 \modd 8).
    % \end{cases} $$
\end{enumerate}

This algorithm reduces the computation of $\text{S}^{(\varphi)}(E_{-n^2}/\Q)$ to finding solutions to a single matrix equation over $\F_2$. While the situation becomes more complicated in our more general setting, where we consider the elliptic curves $E_b$ rather than $E_{-n^2}$, our strategy closely follows the strategy taken by Faulkner and James above.

In addition to producing infinite families of elliptic curves with a fixed $\varphi$-Selmer group, this graph-theoretic interpretation of the $\varphi$-Selmer group can be used to obtain asymptotic results on how $\text{S}^{(\varphi)}(E_{-n^2}/\Q)$ is distributed as $|n|\goto \infty$ (as was done in \cite{rhoades09}). Although we do not pursue such results in this article, we expect that our work can similarly be applied to study the distribution of $\text{S}^{(\varphi)}(E_b / \Q(i))$ as $\text{Nm}(b)\goto \infty$. 

\subsection{\texorpdfstring{From $E_{-n^2} /\mathbb{Q}$ to $E_b / \mathbb{Q}(i)$}{From E\_{-n\^2} /Q to E\_b / Q(i)}} The aim of this article is to generalize the work of Feng and Xiong, and Faulkner and James to elliptic curves of the form $E_b: y^2 = x^3 + b x$, thereby producing a linear-algebraic algorithm which computes the \phiS of any elliptic curve with $j$-invariant 1728. Our starting point is the concrete description (\ref{Selmer-description}) of the $\varphi$-Selmer group. 

For a fixed square-free $d$ dividing $2b$, we must first determine conditions which guarantee $C_d(K_v)\neq \emptyset$ for each $v \mid 2b$ (the so-called \textit{local solubility conditions}). In contrast to the case of $b = - n^2$, where these local solubility conditions are equivalent to conditions on the quadratic residue symbols $\lleg{n}{v}$ and $\lleg{n/d}{v}$, for non-square $b$ we find that the local solubility conditions are equivalent to conditions on the quartic residue symbols $\llegg{b}{v}$, $\llegg{b/d}{v}$, and $\llegg{b/d^2}{v}$. Therefore, the methods of \cite{FX04}, which rely upon the multiplicativity of $\lleg{\cdot}{v}$, break down for $E_b / \Q$ because $\llegg{\cdot}{v}$ is not multiplicative over $\Q$. However, the quartic residue symbol is multiplicative over $\Q(i)$, which allows us to extend the methods of \cite{FX04} to $E_b$ so long as we consider it an elliptic curve over $\Q(i)$. This leads to several new complexities which are not present in the work of \cite{FX04} and \cite{FJ07}. The directed graph $G_b$ we associate to $E_b$ must now be a weighted graph, with edges between Gaussian primes labelled by elements of $\mathbb{Z}/4 \Z$. Moreover, the local solubility condition at $1+i$ for $E_b$ is significantly more delicate than the local solubility condition at $2$ for $E_{-n^2}$, since $2 = -i(1+i)^2$ ramifies in $\Q(i)$. \\
\indent Nevertheless, we are able to translate the local solubility conditions to conditions on partitions of $G_b$, which allows us to give a graph-theoretic interpretation of $\text{S}^{(\varphi)}(E_b/\Q(i))$. Moreover, we are able to show that the partitions of $G_b$ which satisfy the local solubility conditions are in bijection with solutions to a matrix equation $L_b' \cdot  \vec{\one}_{v\mid d} = \vecyst$ over $\F_2$, where $L_b'$ is a modified Laplacian matrix of $G_b$. The $v$-th position of $\vecyst$ is defined in terms of $v \modd (1+i)^5)$, analogous to the vector $\vec{y}$ described in (\ref{y-const}), whose $v$-th position is defined in terms of $v \modd 2^3).$ 
\subsection{Overview and Main Results} In Section \ref{section-Primary-Gaussian}, we begin by analyzing the multiplicative group of primary Gaussian integers modulo $(1+i)^k$, denoted by $U_k$, for various $k$. These modulo classes arise when we study the local solubility conditions, and use Hensel's lemma to determine when $C_d(\Q(i)_v) \neq \emptyset$. Our analysis of $U_k$ is essential for uniformly describing the local solubility conditions, rather than breaking into different cases which depend on the modulo class. In Section \ref{section-Primary-Gaussian}, we also review basic properties of the Gaussian quartic residue symbol, and describe how it relates to our description of $U_k$. \\
\indent In Section \ref{section-LSC}, we determine local solubility conditions, which describe for a fixed square-free divisor $d \mid 2b$ when $C_d(\KK_v)\neq \emptyset$ for each $v \mid 2b$ (see \ref{Selmer-description}). Proposition \ref{away-t-LSC} shows for odd $v$ that $C_d(\KK_v)\neq \emptyset$ if and only if certain conditions hold on the Gaussian quartic residue symbols $\LEG{b}{v}, \LEG{b/d}{v}, \LEG{b/d^2}{v}$. Proposition \ref{t-LSC} shows that $C_d(\KK_{1+i}) \neq \emptyset$ if and only if $d$ satisfies one of three simple equations modulo $(1+i)^7$. \\
\indent In Section \ref{section-Graph-theory}, we associate a weighted, directed graph $G_b$ to $b$. To a square-free divisor $d \mid 2b$, we associate a partition of the vertices of $G_b$. We then prove that $d$ satisfies the local solubility conditions away from $v \neq 1+i$ if and only if certain graph-theoretic conditions hold on the partition associated to $d$. \\
\indent In Section \ref{section-Linear-algebra}, we reformulate the results of Section \ref{section-Graph-theory} in terms of linear algebra. In particular, we construct a matrix $L_b'$ over $\F_2$, obtained by modifying the Laplacian matrix $L(G_b)$, such that 
\bal 
C_d(\Q(i)_v) \neq \emptyset \text{ for all }v \neq 1+i \iff L_b'\cdot \vec{\one}_{v\mid d} = \vecyst, 
\nal 
where the vector $\vecyst$ is independent of the odd part of $d$. We summarize our algorithm for computing $\SelEE$ in Theorem \ref{main-thm}, which is the main result of this article. 

In Section \ref{section-applications}, we demonstrate the utility of Theorem \ref{main-thm}. In Subsection \ref{subsec-mod-2-reduction}, we show that if $b$ is either square or square-free, and all of its odd divisors are congruent to 1 modulo $16$, then the $\varphi$-Selmer group is determined by an undirected simple graph $\overline{G_b}$ -- the \textit{mod 2 reduction} of the directed, weighted graph $G_{b}$. Both $G_b$ and $\overline{G_b}$ share the same vertex set (the odd Gaussian primes dividing $b$); however, the edges of $\overline{G_b}$ are defined via Gaussian quadratic residue symbols rather than Gaussian quartic residue symbols. In these cases, Corollary \ref{cor-simplified-b-square-or-squarefree} shows that the size of the $\varphi$-Selmer group is determined by the number of odd divisors of $b$ and the binary rank of $L(\overline{G_b})$ (analogous to results for $E_{-n^2}/\mathbb{Q}$; cf.~\cite[Corollary 5.9]{FJ07}), yielding a purely graph-theoretic interpretation of $\# \SelEE$ (Corollary \ref{cor-bicycles}).

In Subsection \ref{subsec-inert}, we apply Theorem \ref{main-thm} to compute $\SelEE$ in the case where all the odd primes dividing $b$ are inert. The results are summarized in the following theorem. 
\begin{thm}[Theorem \ref{thm-inert}] Let $b \in \Z[i]$ be fourth-power-free with all the odd primes dividing $b$ inert, so that $b$ can be factored as 
\bal 
b = i^{s_b} (1+i)^{t_b} p_1^{r_1} \cdots p_M^{r_M} q_1^2 \cdots q_N^2 \quad \text{ for some }s_b, t_b \in \{0,1,2,3\}, r_i \in \{1,3\},
\nal 
and rational primes $p_i, q_j \equiv 3 \modd 4)$. For $k \in \{3,7,11,15\}$, define
\bal 
M_k := \#\{p_i \mid b: p_i \equiv k \modd 16)\} \quad \text{and}\quad N_k := \{ q_j \mid b: q_j \equiv k \modd 16)\}.
\nal 
Then $\SelEE$ is described as follows (see Theorem \ref{thm-inert} for the definition of $c$).
\begin{enumerate}
\item If $t_b$ is odd, then $\SelEE \cong (\Z/2\Z)^{M_7 + M_{15} + N_7 +c}$ for some $c \in \{1,2\}.$
\item If $(s_b,t_b) \in \{(1,0), \ (3,0), \ (0,2), \ (2,2)\}$, then $\SelEE \cong (\Z/2\Z)^{M + N_7 + N_{15} + c}$ for some $c \in \{-1,0,1\}$.
\item If $(s_b, t_b) \in \{(0,0), \ (2,0), \ (1,2), \ (3,2)\}$, then $\SelEE \cong (\Z/2\Z)^{M+N + c}$ for some $c \in \{0,1,2\}$.
\end{enumerate}
\end{thm}

Lastly, in Subsection \ref{subsec-rank0-families}, we apply Theorem \ref{main-thm} to construct three infinite subfamilies within $\{E_b / \Q(i): b \in \Z[i]\}$ for which $\#\text{S}^{(\varphi)}(E_b / \Q(i)) =2$, thereby producing infinite families of elliptic curves with trivial rank by (\ref{rank-bound}). In particular, we obtain the following.

\begin{thm}[Corollaries \ref{cor-rank0-tb1}, \ref{cor-rank0-tb2}, \ref{cor-rank0-emptyLb}]
Suppose $b \in \Z[i]$ is fourth-power-free and satisfies one of the following conditions:
\begin{itemize}
    \item $b = i^{s_b} (1+i)^3 p_1^{r_1} \cdots p_M^{r_M} q_1^2 \cdots q_N^2$, where $s_b \in \Z$, $r_i \in \{1,3\}$, and $p_i, q_j$ are rational primes satisfying $p_i \equiv 3 \modd 8)$ and $q_j \equiv 3, 11, 15 \modd 16)$ for all $i,j$;
    
    \item $b = \pm 2i\, p_1^{r_1} p_2^{r_2} q_1^2 \cdots q_N^2$, where $r_1, r_2 \in \{1,3\}$ and $p_1, p_2, q_1,\dots, q_N$ are rational primes with $p_1 \equiv p_2 \equiv q_j \equiv 3 \modd 8)$ for all $j$;
    
    \item $b = i^{s_b} (1+i)^{t_b} \q_1^2 \cdots \q_N^2$, where $s_b, t_b \in \Z_{>0}$ are odd integers, $(s_b, t_b) \not\equiv (3,1) \modd 4)$, and each $\q_j$ is a primary Gaussian prime such that
    \bal
    \q_j \equiv -3 \modd \t^5) \quad \text{or} \quad \q_j \equiv -1 + 2i \modd \t^5).
    \nal
\end{itemize}
    Then for the elliptic curve $E_b: y^2 = x^3 + b x$ over $\Q(i)$, we have $E_b(\Q(i)) \cong \Z/2\Z.$
\end{thm}

\subsection{Notation} We reserve Fraktur letters for prime Gaussian integers. In particular, we let $\mathfrak{t} = 1 + i$. Since we will frequently consider equations modulo different powers of $\t$, we introduce the following notation: 
\bal 
A \overset{(\t^n)}{\equiv} B \iff A \equiv B \modd \t^n).
\nal 
Similarly, we will write $A\overset{(2^n)}{\equiv} B$ to denote $A \equiv B \modd 2^n)$, so that $$A \overset{(\t^{2n})}{\equiv} B \iff A \overset{(2^n)}{\equiv} B.$$
The vertices of \( G_b \) are identified with the primary Gaussian primes dividing \( b \); we write \( v \) or \( \mathfrak{v} \) interchangeably, depending on whether we view the element as a vertex or a prime.
\section{Primary Gaussian Integers and the Gaussian Quartic Symbol}\label{section-Primary-Gaussian}
\subsection{Primary Gaussian Integers} Recall that a Gaussian integer $\alpha \in \Z[i]$ is \textit{odd} if $\alpha \equiv 1 \modd \t )$ and is \textit{primary} if $\alpha \equiv 1 \modd \t^3)$. Each Gaussian integer $\alpha \in \Z[i]$ has a unique decomposition into primary primes, which is of the form
\begin{align}\label{alpha-factorization}
\alpha = i^{s} \t^t \v_1^{r_1} \cdots \v_N^{r_N} \quad \text{for some $s\in\{0,1,2,3\}$, $t, r_1,\dots, r_N \in \Z_{\geq 0}$},
\end{align}
with each $\v_i$ a primary prime. In this section, we will analyze and describe the multiplicative group $U_k$ of primary Gaussian integers modulo $\t^k$ for various $k$, which we will need to understand when we determine local solubility conditions in Section \ref{section-LSC}. 
\begin{lem}\label{lem-Uk-m-n} For all $k\in \{3,\dots, 9\}$, let
$U_k := \{\alpha \in (\Z[i]/(\t^k))^*: \alpha \equiv 1 \modd 1+i)^3\}$. Then 
\bal 
U_k \cong \begin{cases}
    (\Z / 2^l\Z)\oplus (\Z/2^l \Z) &\text{if }k = 3 + 2l,\\
    (\Z / 2^{l-1}\Z)\oplus (\Z/2^l \Z)&\text{if }k = 2 + 2l.
\end{cases}
\nal 
In particular, we have the following group isomorphism
\begin{align}\label{group-iso-lem-mult-Uk}
(\Z/8\Z)\oplus (\Z/8\Z) &\overset{\sim}{\longrightarrow}U_9,\\
(m,n)&\longmapsto (1-4i)^m (-1-6i)^n.\nonumber
\end{align}
\end{lem}
\begin{proof} Each $\alpha \in U_k$ can be uniquely described as 
$$\alpha = 1 + a_3 (1+i)^3 + a_4 (1+i)^4 + \cdots + a_{k-1}(1+i)^{k-1}$$ for some $a_3, a_4, \dots, a_{k-1} \in \{0,1\}$; hence, 
$$\#U_k = 2^{k-3} = \begin{cases}
    (2^l)(2^l) &\text{if }k = 3 + 2l,\\
    (2^{l-1})(2^l)&\text{if }k = 2+ 2l. 
\end{cases} $$
To complete the proof, it suffices to check that $(1-4i)$ and $(-1-6i)$ both have order 8 in $\Z[i]/(1+i)^9\Z[i]$, and that $(1-4i)^m = (-1-6i)^n$ if and only if $8\mid m$ and $8\mid n$.
\end{proof}
\begin{rem}\label{rem-mv-nv} For any primary $\alpha \in \Ok$, it follows from Lemma \ref{lem-Uk-m-n} that there exist unique $m_\alpha, n_\alpha\in \Z/8\Z$ such that 
$$
\alpha \equiv (1-4i)^{m_\alpha} (-1-6i)^{n_\alpha} \modd \t^9).
$$
For all primary $\alpha, \beta \in \Ok$, observe that 
\bal 
m_{\alpha \beta} = m_\alpha + m_\beta \quad \text{and}\quad n_{\alpha \beta} = n_\alpha + n_\beta.
\nal 
We will frequently refer to the values $m_\alpha$ and $n_\alpha$ throughout this paper. We will also use $m_\alpha, n_\alpha$ to denote the corresponding images of $m_\alpha, n_\alpha$ under the quotient maps $\Z/8\Z \goto \Z/4\Z \goto \Z/2\Z$. It follows from Lemma \ref{lem-Uk-m-n} that
\bal 
\alpha &\Eqqqqq (1-4 i)^{m_\alpha} (-1-6i)^{n_\alpha} \quad \text{ with }m_\alpha, n_\alpha \in \Z/8\Z,\\
\implies \alpha  &\Eqqqq (1-4 i)^{m_\alpha} (-1-6i)^{n_\alpha} \quad \text{ with }m_\alpha, n_\alpha \in \Z/4\Z,\\
\implies \alpha  &\Eqqq (-3)^{m_\alpha} (-1+2i)^{n_\alpha} \quad \hspace{.5cm}\text{ with }m_\alpha, n_\alpha \in \Z/2\Z,\\
\implies \alpha  &\Peqq  (3+2i)^{n_\alpha} \quad \qquad \hspace{1.23cm}\text{ with }n_\alpha \in \Z/2\Z,
\nal
since $1-4i \equiv -3 \modd \t^5)$ and $-1-6i \equiv-1+2i \modd \t^5)$. When $\alpha \in \Z[i]$ is not primary, we will often write $m_\alpha$ and  $n_\alpha$ to denote $m_{\alpha_0^+}$ and $n_{\alpha_0^+}$, respectively, where (in the notation of \ref{alpha-factorization}) $\alpha_0^+:=i^{-s} \t^{-t} \alpha$ is the primary part of $\alpha$.
\end{rem}

The previous lemma describes elements of $U_k$ as products of generators $(1-4i)$ and $(-1-6i)$ of $U_k$. It will also prove useful to describe elements of $U_k$ as sums of powers of $\t$. 

\begin{lem}\label{lem-add-Uk} Let $\alpha \in \Z[i]$ be primary, so there exists unique $(m_\alpha, n_\alpha) \in (\Z/4\Z)^2$ such that $\alpha \equiv (1-4i)^{m_\alpha} (-1-6i)^{n_\alpha} \modd \p^7)$ by Remark \ref{rem-mv-nv}. Let $m_\alpha = m_0 + 2 m_1$ and $n_\alpha = n_0 + 2 n_1$ with $m_0,m_1, n_0, n_1 \in \{0,1\}$. Then
\bal 
\alpha \equiv 1 + n_0 \t^3 + m_0 \t^4 + (m_0 + n_1)\t^5 + (m_0 + n_0 + m_1) \t^6 \modd \t^7).
\nal 
\end{lem}

\begin{proof} 
First, we compute the generators of $U_7$ as sums of $\t^i$:
\bal 
 1 - 4i \eqqqq 1 + \t^3 + \t^4 + \t^5 + \t^6 \quad \text{ and } \quad -1 - 6i \eqqqq 1 + \t^3 + \t^6.
\nal 
Now $(1-4i)^{m_\alpha} (-1-6i)^{n_\alpha} \modd \t^k)$ can be computed recursively for $k = 4, 5, 6, 7$ to verify the claim.  
\end{proof}
\subsection{Gaussian Quartic Residue Symbol} We begin by recalling basic properties of the quadratic and quartic residue symbols over $\Z[i]$. Proofs of these statements can be found in \cite[Chapter 6]{lem}.

For any $\alpha \in \Z[i]$ and odd prime $\p \in \Z[i]$ which does not divide $\alpha$, we have 
$$
\alpha^{\text{Nm}(\p)-1} \equiv 1 \modd \p),
$$
where $\text{Nm}(x + y i):= x^2 + y^2.$ Since $\text{Nm}(\p) \equiv 1 \modd 4)$ for any primary prime $\p$, we must have $\alpha^{(\text{Nm}(\p)-1)/4} \equiv i^k \modd \p)$ for some $k \in \{0,1,2,3\}$. The \textit{Gaussian quartic residue symbol} (also known as the biquadratic residue character) of $\alpha \modd \p)$ is defined to be 
\bal 
\LEG{\alpha}{\p} := i^k \equiv \alpha^{(\text{Nm}(\p)-1)/4} \modd \p). 
\nal 
This is called the Gaussian quartic residue symbol because
\bal 
\LEG{\alpha}{\p} = 1 \iff \alpha \equiv \beta^4 \modd \p) \quad \text{for some }\beta \in \Z[i].
\nal 
Between rational integers, the Gaussian quartic residue symbol is always trivial:
\begin{align}\label{quartic-rationals}
    \LEG{a}{b} =1 \quad \text{for all odd, coprime }a,b \in \Z.
\end{align}
We also have the \textit{Gaussian quadratic residue symbol}, which satisfies
\bal 
\LEGGG{\alpha}{\p} \equiv \alpha^{(\text{Nm}(\p) -1) /2} \modd \p) = \begin{cases}
    1 &\text{if }\alpha \equiv \beta^2 \modd \p) \text{ for some }\beta \in \Z[i],\\
    -1 &\text{else.}
\end{cases}
\nal 
Note that $\LEGGG{\alpha}{\p} = \LEGG{\alpha}{\p}$. We extend the Gaussian quartic/quadratic residue symbol of $\alpha \modd \p)$ to $\alpha$ which are not necessarily coprime to $\p$ by defining
\bal 
\LEG{\alpha}{\p} := \LEG{\alpha_0}{\p} \quad  \text{ and } \quad \LEGGG{\alpha}{\p} := \LEGGG{\alpha_0}{\p}, \quad \text{where }\alpha_0 := \alpha / \p^{\text{v}_\p(\alpha)}.
\nal

The Gaussian quartic residue symbol enjoys many properties similar to those of the Legendre symbol, which we will use frequently. For all $\alpha, \beta \in \Z[i]$ and $\p \in \Z[i]$ primary prime, it is multiplicative: 
\bal 
\LEG{\alpha \beta}{\p} = \LEG{\alpha}{\p} \LEG{\beta}{\p}. 
\nal 
We also have the supplementary laws of quartic reciprocity: for any primary prime $\p = x + y i$, 
\begin{align}\label{supplemental-reciprocity}
\LEG{i}{\p} = i^{(-x +1)/2}\quad \text{ and }\quad \LEG{1 + i}{\p} = i^{(x-y-1-y^2)/4}.
\end{align}
\noindent Lastly, we have the law of quartic reciprocity: for all primary primes $\p$ and $\q$, 
\bal 
\LEG{\q}{\p} = \begin{cases} -\LEG{\p}{\q} &\text{if }\p \equiv \q \equiv 3 + 2 i \modd 4),\\
\LEG{\p}{\q} &\text{else}.
\end{cases}
\nal 
We now relate the Gaussian quartic residue symbol to our description of $U_k$. Although there are different group isomorphisms between $U_9$ and $(\Z/8\Z)^2$, the next lemma shows why the choice of (\ref{group-iso-lem-mult-Uk}) is convenient.
\begin{lem}\label{lem-special-values} Let $\p= x + y i\in \Z[i]$ be a primary Gaussian prime. Then 
$$
m_\p \overset{(4)}{\equiv} \log_i \left[\frac{1+i}{\p}\right]_{4}\quad \text{ and } \quad n_\p \overset{(4)}{\equiv} \log_i  \left[\frac{i}{\p}\right]_{4}.
$$
\end{lem}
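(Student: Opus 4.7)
The plan is to verify each congruence by showing that both sides define group homomorphisms on $U_7$ and then checking them on the two generators of $U_7$ provided by Lemma \ref{Uk-group-iso}.

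First, I would establish that both sides descend to homomorphisms $U_7 \to \Z/4\Z$. The left-hand assignment $\alpha \mapsto (m_\alpha, n_\alpha) \bmod 4$ is essentially the inverse of the isomorphism $(\Z/4\Z)^2 \xrightarrow{\sim} U_7$, $(m,n) \mapsto (1-4i)^m (-1-6i)^n$ obtained from Lemma \ref{Uk-group-iso} with $k = 7$, so it is automatically a homomorphism by Definition \ref{mv-nv-defn}. For the right side, the quartic symbol extends multiplicatively in its bottom argument via the Jacobi-style product, so $\alpha \mapsto \left(\text{log}_i \LEG{1+i}{\alpha},\; \text{log}_i \LEG{i}{\alpha}\right)$ is a homomorphism from primary Gaussian integers to $(\Z/4\Z)^2$. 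To show it factors through $U_7$, I would apply the supplementary reciprocity law (\ref{supplemental-reciprocity}): for a primary prime $\p = x + yi$, $\LEG{i}{\p} = i^{(-x+1)/2}$ and $\LEG{1+i}{\p} = i^{(x-y-1-y^2)/4}$. A direct substitution $\p \mapsto \p + \t^7 \gamma$ into these exponents, keeping track of the resulting changes mod $4$, shows the required invariance for primes; multiplicativity then propagates this to composite primary $\alpha$.

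Once both sides are known to be homomorphisms on $U_7 \cong (\Z/4\Z)^2$, it suffices to verify the identities on the generators $1 - 4i$ and $-1 - 6i$. For $\alpha = 1 - 4i$, Definition \ref{mv-nv-defn} gives $(m_\alpha, n_\alpha) = (1, 0)$; substituting $x = 1,\; y = -4$ into (\ref{supplemental-reciprocity}) yields $\LEG{i}{1-4i} = i^{0} = 1$ and $\LEG{1+i}{1-4i} = i^{-3} = i$, matching $i^{n_\alpha}$ and $i^{m_\alpha}$. For $\alpha = -1 - 6i$, Definition \ref{mv-nv-defn} gives $(m_\alpha, n_\alpha) = (0, 1)$; substituting $x = -1,\; y = -6$ yields $\LEG{i}{-1-6i} = i^{1} = i$ and $\LEG{1+i}{-1-6i} = i^{-8} = 1$, again matching $i^{n_\alpha}$ and $i^{m_\alpha}$.

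The main obstacle is the invariance check in the first step: verifying carefully that $\LEG{i}{\p}$ and $\LEG{1+i}{\p}$ depend on a primary prime $\p$ only modulo $\t^7$, using the explicit formulas from (\ref{supplemental-reciprocity}) and the fact that primarity constrains $x, y$ enough that the cross terms in $(x - y - 1 - y^2)/4$ simplify modulo $4$ under the shift by $\t^7\gamma$. Once this invariance is granted, everything reduces to a routine plug-in on the two generators.
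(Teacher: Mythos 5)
Your proof is correct, but it takes a genuinely different route from the paper's. The paper argues by direct computation: it applies the supplementary laws to a general primary $\alpha$, uses Lemma \ref{lem-add-Uk} to write $\alpha \bmod 16$ additively as $1 + n_0\t^3 + m_0\t^4 + \cdots + a_7\t^7$, extracts $x$ and $y$ modulo $16$ in terms of the digits $m_0,m_1,n_0,n_1,a_7$, and substitutes into the exponents $(1-x)/2$ and $(x-y-1-y^2)/4$. You instead exploit the group structure of $U_7 \cong (\Z/4\Z)^2$: both sides are homomorphisms, so it suffices to check the two generators, and your generator computations ($\LEG{1+i}{1-4i}=i^{-3}=i$, $\LEG{i}{-1-6i}=i$, etc.) are right; conveniently $1-4i$ and $-1-6i$ are themselves primary primes, so (\ref{supplemental-reciprocity}) applies to them as stated. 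Your route buys a cleaner conceptual argument and avoids the bookkeeping of expanding powers of $\t$, at the cost of having to justify that $\alpha \mapsto \log_i\LEG{i}{\alpha}$ and $\alpha\mapsto\log_i\LEG{1+i}{\alpha}$ are well defined on $U_7$. That well-definedness does hold --- since $\t^7\gamma = (8-8i)(u+vi)$ shifts $x$ by $8(u+v)$ and $y$ by $8(v-u)$, one checks $(1-x)/2$ changes by a multiple of $4$ and $(x-y-1-y^2)/4$ changes by $4u - 4y(v-u) - 16(v-u)^2 \equiv 0 \pmod 4$ --- but be careful with the phrase ``multiplicativity then propagates this to composite primary $\alpha$'': invariance of the symbol on \emph{primes} congruent mod $\t^7$ does not by itself give invariance for composites, since congruent composites need not have termwise-congruent factorizations. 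The clean fix is to invoke the supplementary laws for the quartic \emph{Jacobi} symbol on all primary $\alpha$ (which is what \cite[Chapter 6]{lem} provides, and what the paper itself implicitly uses), after which your mod-$\t^7$ invariance computation applies uniformly to every primary $\alpha$ and the generator check finishes the proof.
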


\begin{proof} By (\ref{supplemental-reciprocity}), we have
\begin{align}\label{Lemmermeyer-eqns}
    \text{log}_i \left[\frac{1+i}{\p}\right]_{4} &\eqq \frac{x - y - y^2 -1}{4} \quad \text{ and }\quad 
    \text{log}_i  \left[\frac{i}{\p}\right]_{4} \eqq \frac{1-x}{2}.
\end{align}
Since $\t^8 = 16$, Lemma \ref{lem-add-Uk} implies
\bal 
\alpha \overset{(16)}{\equiv} 1 + n_0 \t^3 + m_0 \t^4 + (m_0 + n_1)\t^5 + (m_0 + n_0 + m_1) \t^6 + a_7 \t^7 \quad \text{ for some }a_7\in \{0,1\},
\nal 
where $m_0, m_1, n_0, n_1$ are defined as in Lemma \ref{lem-add-Uk}. Expanding $\t=1+i$ shows that 
\bal 
x &\overset{(16)}{\equiv} 1 - 2n_0 -4 n_1 + 8(m_0 + a_7),\\
y &\overset{(16)}{\equiv} 4(m_0 - n_1) + 8(m_1 + a_7) + 10 n_0.
\nal 
Now plugging $x$ and $y$ into (\ref{Lemmermeyer-eqns}) gives the desired result.
% \bal 
% \text{log}_i \left[\frac{1+i}{\alpha}\right]_{4} &\eqq m_0 + 2 m_1 + n_0(1-n_0) = m_0 + 2 m_1 =m_\alpha, \\
% \text{log}_i  \left[\frac{i}{\alpha}\right]_{4} &\eqq n_0 + 2 n_1 = n_\alpha.
% \nal 
\end{proof}
\begin{rem}\label{quartic-rec-rem} Lemma \ref{lem-special-values} shows how the values $m_\p$ and $n_\p$ relate to the supplementary laws of quartic reciprocity. We can also express the main law of quartic reciprocity as 
$$
\text{log}_i \LEG{\q}{\p} \eqq \text{log}_i \LEG{\p}{\q} + 2 n_{\p} n_\q. 
$$
\end{rem}

\section{Local Solubility Conditions}\label{section-LSC} To determine if some square-free divisor $d\in \Z[i]$ of $2b$ is in $\text{S}^{(\varphi)}(E_b/\Q(i))$, we need to determine if $C_d$ has a $\KK_\v$-point for each prime $\v\mid 2\b$ (see (\ref{Selmer-description})). We deal with the odd $\v$ and with $\v = \t$ separately, which prompts the following definitions. 
 \begin{defn} We say a square-free divisor $\d\in \Z[i]$ of $2\b$ \textit{satisfies LSC away from }$1+i$ if $\cdd(\KK_\v) \neq \emptyset$ for all odd primes $\v \mid \b$. 
 \end{defn}
 \begin{defn} We say a square-free divisor $\d \in \Z[i]$ of $2\b$ \textit{satisfies LSC at }$1+i$ if $\cdd(\KK_\t)  \neq \emptyset$.
 \end{defn}
In the following proposition, we prove that $d$ satisfies LSC away from $1+i$ if and only if certain quartic residues hold. To prove the proposition, we will make frequent use of a multivariable version of Hensel's lemma. This is well-known, but we state and prove it here for convenience.  

\begin{lem}[Multivariable Hensel's Lemma]\label{multivar-Hensel} Let $(A,\mathfrak{p})$ be a complete DVR with absolute value $\left|\cdot \right|$.
Let $f\in A[T_{1},\dots,T_{n}]$ and suppose $\mathbf{a}=(a_{1},\dots,a_{n})\in A^{n}$
satisfies 
\[
\left|f(\mathbf{a})\right|<\left\Vert \nabla f(\mathbf{a})\right\Vert ^{2}:=\max_{1\leq i\leq n}\left\{ \left|\frac{\partial f}{\partial T_{i}}(\mathbf{a})\right|^{2}\right\} .
\]
Then there exists $\alpha=(\alpha_{1},\dots,\alpha_{n})\in A^{n}$
such that $f(\alpha)=0$ and $\left\Vert \alpha-\mathbf{a}\right\Vert <\left\Vert \nabla f(\mathbf{a})\right\Vert .$
\end{lem}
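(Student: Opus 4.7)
The plan is to reduce the multivariable statement to the standard one-variable Hensel's lemma by freezing all but one coordinate. Concretely, pick an index $i$ that realizes the maximum defining $\|\nabla f(\mathbf{a})\|$, i.e.\ for which
\[
\left|\tfrac{\partial f}{\partial X_i}(\mathbf{a})\right| = \|\nabla f(\mathbf{a})\|,
\]
and consider the one-variable polynomial
\[
g(X) := f(a_1,\dots,a_{i-1},X,a_{i+1},\dots,a_n) \in A[X].
\]
Then $g(a_i)=f(\mathbf{a})$ and $g'(a_i) = \tfrac{\partial f}{\partial X_i}(\mathbf{a})$, so the hypothesis $|f(\mathbf{a})| < \|\nabla f(\mathbf{a})\|^2$ becomes exactly the classical inequality $|g(a_i)|<|g'(a_i)|^2$.

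Next I would invoke the standard (one-variable) Hensel's lemma over the complete DVR $A$ applied to $g$ at $a_i$, yielding $\alpha_i \in A$ with $g(\alpha_i)=0$ and $|\alpha_i - a_i| \le |g(a_i)|/|g'(a_i)| < |g'(a_i)| = \|\nabla f(\mathbf{a})\|$. Setting
\[
\alpha := (a_1,\dots,a_{i-1},\alpha_i,a_{i+1},\dots,a_n),
\]
we get $f(\alpha)=g(\alpha_i)=0$ and $\|\alpha-\mathbf{a}\| = |\alpha_i - a_i| < \|\nabla f(\mathbf{a})\|$, as required.

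There is no real obstacle here: everything reduces to choosing the right coordinate, and the hypothesis is tailored to match the one-variable input. The only point that merits mention is why taking a maximum works, namely that the single-variable conclusion along the $i$th axis automatically gives the uniform bound $\|\alpha-\mathbf{a}\|<\|\nabla f(\mathbf{a})\|$ on the full vector, since the other coordinates are left unchanged. I would state the classical Hensel's lemma (for a complete DVR, in the form $|g(a)|<|g'(a)|^2 \Rightarrow \exists\,\alpha,\; g(\alpha)=0,\;|\alpha-a|\le |g(a)|/|g'(a)|$) as a known input rather than reprove it.
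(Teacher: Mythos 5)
Your proposal is correct and is essentially identical to the paper's own proof: both select the coordinate realizing the maximum in $\left\Vert \nabla f(\mathbf{a})\right\Vert$, freeze the others, and apply the one-variable Hensel's lemma to the resulting single-variable polynomial. The only cosmetic difference is that you quote the one-variable lemma with the sharper bound $|\alpha_i-a_i|\le |g(a_i)|/|g'(a_i)|$, while the paper uses the weaker form $|\beta-a_j|<|g'(a_j)|$, which already suffices.
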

\begin{proof}
Choose $j \in \{1,\dots,n\}$ such that $\left\Vert \nabla f(\mathbf{a})\right\Vert ^{2}=\left|\partial f/\partial T_{j}(\mathbf{a})\right|^{2}$.
Define 
\[
g(T)=f(a_{1},\dots,a_{j-1},T,a_{j+1},\dots,a_{n})\in A[T].
\]
Then $g(a_{j})=f(\mathbf{a})$ and $g'(a_{j})=\partial f/\partial T_{j}(a_{j})$
so our hypothesis translates exactly to $\left|g(a_{j})\right|<\left|g'(a_{j})\right|^{2}$.
By single variable Hensel's Lemma we have $\beta\in A$ such that
$g(\beta)=0$ and 
\[
\left|\beta-a_{j}\right|<\left|g'(a_{j})\right|=\left\Vert \nabla f(\mathbf{a})\right\Vert .
\]
Let $\alpha=(a_{1},\dots,\beta,\dots,a_{n})$ where $\beta$ is in
the $j$th coordinate. Then $f(\alpha)=0$ while 
\[
\left\Vert \alpha-\mathbf{a}\right\Vert =\left|\beta-a_{j}\right|<\left|g'(a_{j})\right|=\left\Vert \nabla f(\mathbf{a})\right\Vert.
\]
\end{proof}
\vspace{-\baselineskip}
\begin{prop}\label{away-t-LSC} Let $b \in \Z[i]$ be fourth-power-free, so it can be uniquely factored as
\bal
\b = i^{s_b} \t^{t_b} \p_1^{r_1}\cdots \p_M^{r_M} \q_1^2\cdots \q_N^{2} \quad \text{for some $s_b, t_b\in \{0,1,2,3\}$, $r_i \in \{1,3\}$,}
\nal 
with each $\p_i,\q_j$ a primary Gaussian prime. Let $d \in\Z[i]$ be a square-free divisor of $2b$. Then 
\begin{enumerate}
    \item If $\p_i\nmid d$, then $\cdd(\KK_{\p_i}) \neq \emptyset \hspace{.05cm}\iff \LEGGG{\d}{\p_i} = 1$.\\
    \item If $\q_j \nmid \d$, then $\cdd(\KK_{\q_j}) \neq \emptyset \iff \LEGGG{b/\d}{\q_j} = 1$ or $\LEGGG{\d}{\q_j} =1$. \\
    \item If $\p_i \mid \d$, then  $\cdd(\KK_{\p_i}) \neq \emptyset \hspace{.05cm}\iff \LEGGG{\b/\d} {\p_i} = 1$.\\
    \item If $\q_j \mid \d$, then $\cdd(\KK_{\q_j}) \neq \emptyset \iff \LEG{\b/\d^2}{\q_j} = (-1)^{n_{\q_j}}$.
\end{enumerate}

\end{prop}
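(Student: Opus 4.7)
The plan is to substitute into the affine equation $\cd: dw^{2} = d^{2} - 4bz^{4}$, analyze the possible $\v$-adic valuations of $(z,w)$ that could yield a local solution, and apply Hensel's lemma (Lemma~\ref{multivar-Hensel}) to lift approximate mod-$\v$ solutions. We separately check the two projective points $[0:0:\pm 2i\sqrt{b/d}:1]$ at infinity, which contribute $K_{\v}$-points exactly when $b/d$ is a square in $K_{\v}$; in particular this requires $v_{\v}(b/d)$ to be even, so these points will only be relevant in parts (3) and (4).

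Part~(1) is immediate since $K_{\infty}\cong\mathbb{C}$ and every smooth projective curve has a complex point. For Part~(2), with $\p_i\nmid d$ and $r_i:=v_{\p_i}(b)\in\{1,3\}$ odd, a parity check on $v_{\p_i}(dw^{2})=2v_{\p_i}(w)$ versus $v_{\p_i}(d^{2}-4bz^{4})=\min\{0,\,r_i+4v_{\p_i}(z)\}$ rules out $v_{\p_i}(z)<0$ and forces $v_{\p_i}(w)=0$; the equation then reduces mod $\p_i$ to $w^{2}\equiv d$, which is solvable iff $\LEGG{d}{\p_i}=1$. The points at infinity contribute nothing since $v_{\p_i}(b/d)=r_i$ is odd.

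Parts~(3) and~(4) have $v_{\v}(b/d)$ even, opening a second regime. In Part~(3), with $\q_j\nmid d$: the regime $v_{\q_j}(z)\geq 0$ reduces as in Part~(2) and yields $\LEGG{d}{\q_j}=1$; the regime $v_{\q_j}(z)<0$, after rescaling $z=\q_j^{-1}z'$, $w=\q_j^{-1}w'$ and writing $b=\q_j^{2}b_0$, reduces to $dw'^{2}\equiv -4b_0 z'^{4}\pmod{\q_j}$, which---using $-4=(2i)^{2}$---is solvable iff $b_0/d$ is a square, i.e., $\LEGG{b/d}{\q_j}=1$; the points at infinity give the same latter condition. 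Part~(4), with $\p_i\mid d$, is analogous: writing $d=\p_i d_0$ and dividing the equation by the appropriate power of $\p_i$, the sub-cases $r_i=1$ (with $v_{\p_i}(z)=0$) and $r_i=3$ (with $v_{\p_i}(z)=-1$) both collapse to $\LEGG{b/d}{\p_i}=1$.

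The main obstacle is Part~(5). Here $\q_j\mid d$, so writing $d=\q_j d_0$ and $b=\q_j^{2}b_0$, the equation becomes $d_0 w^{2}=\q_j(d_0^{2}-4b_0 z^{4})$, and the valuation-parity argument forces $v_{\q_j}(z)=0$ together with $v_{\q_j}(d_0^{2}-4b_0 z^{4})\geq 1$. We therefore need the \emph{quartic} congruence $z^{4}\equiv d_0^{2}/(4b_0)\pmod{\q_j}$ to be solvable. Rewriting $4=-\t^{4}$ (and noting $\LEG{\t^{4}}{\q_j}=1$ since $\t^4$ is a fourth power), this is equivalent to $\LEG{b/d^{2}}{\q_j}=\LEG{-1}{\q_j}$. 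The supplementary law~\eqref{supplemental-reciprocity} combined with Lemma~\ref{lem-special-values} then identifies $\LEG{-1}{\q_j}=\LEG{i}{\q_j}^{2}=(-1)^{n_{\q_j}}$, giving the stated condition. Lifting the mod-$\q_j$ approximate solution uses the multivariable Hensel's lemma applied (after the substitution $w=\q_j w'$) to $g(w',z):=d_0 \q_j w'^{2}+4b_0 z^{4}-d_0^{2}$, whose partial derivative $\partial_z g = 16 b_0 z^3$ is a $\q_j$-unit at any solution. The points at infinity contribute nothing since $v_{\q_j}(b/d)=1$ is odd.
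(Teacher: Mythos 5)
Your proof is correct and follows the same overall strategy as the paper's: a valuation analysis of the affine equation $dw^2=d^2-4bz^4$, lifting via the multivariable Hensel Lemma \ref{multivar-Hensel}, and a separate accounting of the two points $[0:0:\pm 2i\sqrt{b/d}:1]$ at infinity. Two of your sub-arguments genuinely diverge from (and in one case improve on) the paper's. First, in parts (3) and (4) you explicitly track affine points whose coordinates have negative valuation. The paper's case analysis there tacitly assumes the minimum of the competing valuations is attained uniquely, and so misses the subcase where the two relevant exponents are equal and negative (e.g.\ $2\WW=2+4\ZZ<0$ in (3), and the analogous configuration in (4), where the paper asserts $\mathcal{C}_d(\KK_{\p_i})=\emptyset$ outright). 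Such affine points do exist, but---as your reduction $d w_0^2\equiv -4b_0z_0^4 \modd \v)$ shows---only when $\LEGG{b/d}{\v}=1$, in which case the points at infinity already provide a $\KK_\v$-point; so the proposition is unaffected and your version closes this small gap. Second, for the reverse direction of part (5) the paper constructs an explicit approximate root correct to second order in $\q$ and checks $\q^5\mid f(\zz,\ww)$ against $\|\nabla f(\zz,\ww)\|_\q^2=\q^{-4}$; your substitution $w=\q w'$ renormalizes the equation to $g(w',z)=d_0\q w'^2+4b_0z^4-d_0^2$, for which an approximate root modulo $\q$ with unit partial derivative $16b_0z_0^3$ already satisfies the Hensel hypothesis. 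This is cleaner and avoids the ad hoc auxiliary quantities $\alpha,\gamma$ of the paper. The only cosmetic imprecision is in part (4), where you name single representative valuations ($v_{\p_i}(z)=0$ for $r_i=1$, $v_{\p_i}(z)=-1$ for $r_i=3$) in the forward direction; a priori all $v_{\p_i}(z)\leq 0$ (resp.\ $\leq -1$) can occur, but each reduces to the same congruence and hence to $\LEGG{b/d}{\p_i}=1$, so nothing is lost.
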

\begin{proof} 
Note that if $(\ww,\zz) \in \KK_\mathfrak{\v}$ satisfies the equation $\d \ww^2 = \d^2 - 4 \b\zz ^4$ for some prime $\v$, then $z w = 0$ implies $\LEGGG{d}{\v} = 1$, which implies $\v \nmid d$ since $d$ is square-free. Therefore, when proving the (\raa) directions below, we can safely assume $\zz$ and $\ww$ are both nonzero if they satisfy $\d \ww^2 = \d^2 - 4 \b \zz^4$.\\

\noi \textbf{(1)} Suppose $\p_i\nmid \d$ and let $\p :=\p_i$. Since $\vpp(\b/\d) \in \{1,3\}$, we know $\sqrt{\b/\d} \not\in \KK_\p$. Hence, $\cdd(\KK_\p)\neq \emptyset$ if and only if $\mathcal{C}_d(\KK_\p) \neq \emptyset$ (see footnote \ref{Cd-footnote}); i.e., if and only if there exists $\ww,\zz\in \KK_\p$ satisfying $\d \ww^2 = \d^2 - 4 \b \zz^4$.\\

\noi (\raa) Suppose $(\ww,\zz)\in \KK_\p^2$ satisfies
$\d \ww^2 = \d^2 - 4 \b \zz^4.$ Let $\WW= \vpp(\ww)$, $\ZZ = \vpp(\zz)$, and $\BB = \vpp(\b)$, so that $\ww = \ww_0 \p^\WW$, $\zz= \zz_0 \p^{\ZZ}$, and $\b = \b_0 \p^{\BB}$ for some $\ww_0, \zz_0 \in \Z[i]_\p^*$ and $b_0 \in \Z[i]$ with $\p \nmid b_0$. Then 
\bal 
\d \ww_0^2 \p^{2\WW}  = \d^2 - 4 \b_0 \zz_0^4 \p^{\BB+ 4 \ZZ}. 
\nal 
 Now let $c = \text{min}\{2 \WW, \BB + 4\ZZ\}$ and note that $2 \WW \neq \BB + 4 \ZZ$ since $\BB = r_i \in \{1,3\}$. We claim that $c = 0$. Indeed, $c>0$ implies $\p \mid \d$, $c = 2 \WW<0$ implies $\p \mid \d \ww_0^2$, and $c= \BB+ 4 \ZZ<0$ implies $\p \mid 4 \b_0 \zz_0^4$; all of which are impossible. Moreover, we cannot have $\BB+ 4 \ZZ=0$ since $\BB\in\{1,3\}$, so we must have $0=2 \WW< \BB + 4 \ZZ.$ Reducing modulo $\p$ yields $\ww_0^2 \equiv \d \modd \p)$; thus, $\LEGGG{\d}{\p} = 1$.

 \noi (\laa) Suppose $\LEGGG{\d}{\p} = 1$, so there exists $\alpha\in \Ok_\p^*$ which satisfies $\d \equiv \alpha^2 \modd \p)$. Now define $f(T_0,T_1) := \d^2 - 4 \b T_0^4 - \d T_1^2 \in \Ok_\p[T_0,T_1]$. Then $ (\nabla f)(\zz,\ww) = \langle -16 \b \zz^3, -2 \d \ww\rangle$ and
 \bal 
 || (\nabla f)(\zz,\ww)||_\p^2 &= \text{max}\{|\b \zz^3|_\p, |\ww|_\p\}^2  = \p^{-\text{min}\{2\vpp(\b)+6 \vpp(\zz), 2 \vpp(\ww)\}}, \\
 |f(\zz,\ww)|_\p &= |\d^2 - 4 \b \zz^4 - \d \ww^2|_\p = \p^{-\vpp(\d^2 - 4 \b \zz^4 - \d \ww^2)}.
 \nal 
By Lemma \ref{multivar-Hensel}, if $|f(\zz,\ww)|_\p < || (\nabla f)(\zz,\ww)||_\p^2 $ for some $(\zz,\ww) \in \Ok_\p^2$, then there exists a root of $f$ in $\Ok_\p^2$. If we let
 $(\zz,\ww) = (1,\alpha) \in \Ok_\p^2$, then 
 \bal 
  |f(\zz,\ww)|_\p &= |\d^2 - 4 \b -\d \alpha^2|_\p = \p^{-\vpp(\d^2 - 4\b - \d \alpha^2)},\\
|| (\nabla f)(\zz,\ww)||_\p^2 &= \p^{-\text{min}\{2\vpp(\b)+6 \vpp(\zz), 2 \vpp(\ww)\}} = \p^{-\text{min}\{2\vpp(\b), 0\}} = \p^{0}=1.
 \nal 
Therefore, we can conclude that 
 \bal 
  |f(\zz,\ww)|_\p < || (\nabla f)(\zz,\ww)||_\p^2 &\iff \vpp( \d^2 - 4 \b - \d \alpha^2) > 0.
 \nal 
 Since $\vpp(\d-\alpha^2)\geq 1$, we also have
 \bal 
\vpp(\d^2 - \d \alpha^2- 4 \b)\geq \text{min}\{\vpp(
\d^2 - \d\alpha^2), \vpp(-4\b)\}=\text{min}\{\vpp(\d - \alpha^2), \vpp(\b)\} \geq 1,
 \nal 
 which guarantees the existence of a root of $f$ in $\Ok_\p^2$ by Lemma \ref{multivar-Hensel}. By the definition of $f$, this root is in $\cdd(\KK_\p)$.\\

\noi \textbf{(2)} Suppose $\q_j \nmid \d$ and let $\q := \q_j$. Since $\vth(\b/
\d)=2$, Hensel's lemma yields
\bal 
\sqrt{\b/\d} \in \KK_\q\iff \LEGGG{\b/\d}{\q} = 1.
\nal 
Thus, by footnote \ref{Cd-footnote}, 
\bal 
\cdd(\KK_\q) \neq \emptyset \iff \bigg( \LEGGG{\b/\d}{\q} = 1\text{ or } \mathcal{C}_\d(\KK_\q) \neq \emptyset\bigg),
\nal 
where $\mathcal{C}_\d$ is the affine curve over $\KK$ given by $\mathcal{C}_\d: \d \ww^2 = \d^2 - 4 \b \zz^4$. To complete the proof of (3), it will suffice to show:
\begin{align}
 \mathcal{C}_\d(\KK_\q) \neq \emptyset \iff \LEGGG{\d}{\q} = 1.
\end{align}
(\raa) Suppose $\mathcal{C}_\d(\KK_\q) \neq \emptyset;$ i.e., there exists some $(\zz,\ww) \in \KK_\q^2$ which satisfies $\d \ww^2 = \d^2 - 4 \b \zz^4$. If we let $\WW= \vth(\ww)$ and $\ZZ = \vth(\zz)$, then $\ww = \ww_0 \q^\WW$ and $\zz = \zz_0 \q^\ZZ$ for some $\ww_0, \zz_0 \in \Z[i]_\q^*$. Thus, $\mathcal{C}_\d$ becomes
\bal 
\d \ww_0^2 \q^{2\WW} = \d^2 - 4 \b_0 \zz_0^4 \q^{2 + 4\ZZ}. 
\nal 
We let $c := \text{min}\{2\WW, 2 + 4\ZZ\}$. Just as in (2), we must have $c = 0$ since: $c >0$ implies $\q \mid \d^2$, $c= 2\WW<0$ implies $\q \mid \d \ww_0^2$, and $c= 2 + 4 \ZZ<0$ implies $ \q \mid 4 \b_0 \zz_0^4$. Moreover, we cannot have $c= 0 = 2 + 4 \ZZ$, so we must have $c= 2\WW = 0$. Therefore, we have 
\bal 
\d\ww_0^2 = \d^2 - 4 \b_0 \zz_0^4 \q^{2 + 4 \ZZ} \implies \d \ww_0^2 \equiv \d^2 \modd \q) \implies \LEGGG{\d}{\q}=1.
\nal 
\noi (\laa) Suppose $\LEGGG{\d}{\q} = 1$, so there exists some $\alpha \in \Ok_\q^*$ such that $\alpha^2 \equiv \d \modd \q)$. Now let $(\zz,\ww) = (1,\alpha)$ and define $f(T_0,T_1) := \d^2 - 4 \b T_0^4 - d T_1^2 \in \Ok_\q[T_0,T_1]$. Then 
 \bal 
 || (\nabla f)(\zz,\ww)||_\q^2 &= \bigg( \text{max}\{|\b \zz^3|_\q, |\ww|_\q\} \bigg)^2 = \text{max}\{\q^{-2\vth(\b)-6 \vth(\zz)}, \q^{-2\vth(\ww)}\} =\q^{-\text{min}\{4, 0\}} = 1, \\
 |f(\zz,\ww)|_\q &= |\d^2 - 4 \b \zz^4 - \d \ww^2|_\q = \q^{-\vth(\d^2 - 4 \b - \d \alpha^2)}.
 \nal 
 Since $\vth(\d-\alpha^2)\geq 1$, we also have
 \bal 
\vth(\d^2 - \d \alpha^2- 4 \b)\geq \text{min}\{\vth(\d^2 - \d\alpha^2), \vth(-4\b)\}=\text{min}\{\vpp(\d - \alpha^2), 2\} \geq 1.
 \nal 
 Hence, $|f(\zz,\ww)|_\q< || (\nabla f)(\zz,\ww)||_\q^2 $, so there exists a root of $f$ in $\Ok_\q^2$ by Lemma \ref{multivar-Hensel}.\\

 \noi \textbf{(3)} Suppose $\p_i \mid \d$ and let $\p = \p_i$. Let $\b_0, \d_0 \in \Z[i]$ coprime to $\p$, so that $\b =\b_0 \p^\BB$ and $\d = \d_0 \p$, where $\BB := r_i \in \{1,3\}$. By Hensel's lemma, we have
 \bal 
\sqrt{\b/\d} \in \KK_\p \iff \sqrt{\b_0 / \d_0} \in \KK_\p \iff \LEGGG{\b/\d}{\p} =1. 
\nal 
It follows from footnote \ref{Cd-footnote} that
\bal 
\cdd(\KK_\p) \neq \emptyset \iff  \bigg( \LEGGG{\b/\d}{\p}= 1 \text{ or } \mathcal{C}_\d(\KK_\p) \neq \emptyset\bigg).
\nal 
To complete the proof of (3), we will show that $\mathcal{C}_\d(\KK_\p) = \emptyset$. \\

\noi Suppose $\mathcal{C}_\d(\KK_\p) \neq \emptyset$, so there exists $(\zz,\ww) \in \KK_\p^2$ such that $\d \ww^2 = \d^2 - 4 \b \zz^4$.  Let $\WW= \vpp(w)$ and $\ZZ = \vpp(z)$, so that $\ww = \ww_0 \p^k$ and $\zz = \zz_0 \p^\ZZ$ for some $\zz_0, \ww_0 \in \Ok_\p^*$. Dividing $d w^2 = d^2 - 4 bz^4$ by $\p^2$ on both sides yields
\bal 
\d_0 \ww_0^2 \p^{2\WW-1} = \d_0^2 - 4 \b_0 \zz_0^4 \p^{\BB + 4 \ZZ-2}. 
\nal  
Seeking a contradiction, set $c= \text{min}\{2\WW-1,\BB + 4 \ZZ -2\}$. Note that $c \neq 0$ is impossible since: $c>0$ implies $\p \mid \d_0^2$, $c = 2 \WW -1 <0$ implies $\p \mid \d_0 \ww_0^2$, and $c = \BB + 4 \ZZ-2 <0$ implies $ \p \mid 4 \b_0\zz_0^4$. Moreover, we cannot have $c = 0 = \BB+ 4 \ZZ-2$ since $\BB \in \{1,3\}$, and $c = 2 \WW -1= 0$ is impossible. Hence, all possible $c$ lead to a contradiction. \\

\noi \textbf{(4)} Suppose $\q_j\mid d$ and let $\q = \q_j$. In this case, $\vth(\b / d) =1$, and so $\sqrt{\b/\d} \not\in \KK_\q$. Therefore $\cdd(\KK_\q)\neq \emptyset$ if and only if  there exists $(\zz,\ww) \in \KK_\q^2$ satisfying the affine equation $\d \ww^2 = \d^2 - 4 \b \zz^4$ (c.f. footnote \ref{Cd-footnote}). Also, it follows from Lemma \ref{lem-special-values} that
 \bal 
\LEG{4\b/\d^2}{\q}= \LEGGG{i}{\q} \LEG{\t^4}{\q}\LEG{\b/\d^2}{\q} = (-1)^{n_\q} \LEG{\b/\d^2}{\q}. 
 \nal 
 Thus, in order to prove (4), it suffices to prove: 
 \bal 
\exists (\zz, \ww) \in \KK_\q^2 \text{ satisfying }\d \ww^2 = \d^2 - 4 \b \zz^4 \iff \LEG{4\b/\d^2}{\q}=1.
 \nal 

\noi (\raa) Suppose $(\zz,\ww) \in \KK_\q^2$ such that $\d \ww^2 = \d^2 - 4 \b \zz^4$. Let $\WW= \vth(\ww)$ and $\ZZ = \vth(\zz)$, so that $\ww = \ww_0 \q^\WW$, $\zz = \zz_0 \q^{\ZZ}$, $\d = \d_0 \q$, and $\b = \b_0 \q^{2}$ for some $ \ww_0, \zz_0\in \Ok_\q^*$ and $d_0, b_0 \in \Z[i]$ with $\q \nmid d_0, b_0$. Dividing $dw^2 = d^2 - 4 bz^4$ through by $\q^2$ then gives us
\bal 
\d_0 \ww_0^2 \q^{ 2\WW-1}  = \d_0^2 - 4 \b_0 \zz_0^4 \q^{ 4 \ZZ}. 
\nal 
 Now let $c = \text{min}\{2 \WW-1, 4 \ZZ\}$. Note that $c>0$ implies $\q \mid \d_0^2$, $c = 2 \WW -1<0$ implies $\q \mid \d_0 \ww_0^2$, and $c =  4 \ZZ <0$ implies $\q \mid 4 \b_0 \zz_0^4$; all of these are contradictions. Moreover, $2 \WW -1 \neq 0$, so we must in fact have $c = 4 \ZZ = 0 < 2 \WW -1$. Thus, $\ZZ= 0$ and $\WW\geq 1$. This implies 
 \bal 
4 \b_0 \zz_0^4 \equiv \d_0^2 \modd \q) \implies \LEG{4\b/\d^2}{\q} =1.
 \nal 
 (\laa) Suppose $\LEG{4\b/\d^2}{\q}=1$, so $\d_0^2 / (4 \b_0)= \eta_0^4 + \eta_1 \q$ for some $\eta_0\in\Ok^*_\q$ and $\eta_1 \in \Ok_\q$. Let 
\bal 
\alpha := \frac{ 4 \b_0 \eta_1 - \d_0 } { 16 \b_0 \eta_0^3}, \quad \gamma := \frac{3 (\eta_0-1) (4 \b_0 \eta_1-\d_0)^2} { 16 \d_0 ( \d_0^2 - 4 \b_0 \eta_1 \q)}.
\nal 
Note that $\alpha, \gamma \in \Z[i]_\q$ since $\q$ does not divide their denominators. Now define 
\bal 
(\zz,\ww) := (\eta_0 + \alpha \q - \frac{3}{2} \alpha^2 \q^2, \q + \gamma \q^2) \in \Ok_\q^2.
\nal 
As before, we also define $f(T_0,T_1) := \d^2 - 4 \b T_0^4 - \d T_1^2 \in \Ok_\q[T_0,T_1]$. It can be checked that $\q^5 \mid f(\zz,\ww)$, so that $|f(\zz,\ww)|_\q \leq \q^{-5}$. On the other hand, since $\vth(\zz) = 0$ and $\vth(\ww) = 1$, we have
\bal 
(\nabla f)(\zz,\ww) &= \langle -16 \b_0 \zz^3 \q^2 , -2 \d_0 \ww \q  \rangle \\
\implies ||(\nabla f)(\zz,\ww) ||_\q^2 &= || \langle \zz^3 \q^2, \ww \q \rangle||_\q^2 = \text{max}\{ |\zz^6 \q^4|_\q , |\ww^2 \q^2|_\q \} = \q^{-4}. 
\nal 
 Hence, $|f(\zz,\ww)|_\q \leq \q^{-5}< \q^{-4} = || (\nabla f)(\zz,\ww)||_\q^2 $, so there exists a root of $f$ in $\Ok_\q^2$ by Lemma \ref{multivar-Hensel}. \end{proof}

Unlike the local solubility conditions away from $\t$, we do not relate local solubility conditions at $\t$ to any quartic residues. Instead, we prove that $d$ satisfies LSC at $1+i$ if and only if $d$ satisfies one of three simple equations involving $b$ mod $\t^5$ or $\t^7$. 
% In practice, these equations are quick to check as there are only 64 odd residue classes modulo $\t^7$.
In the proof of the proposition, we will frequently apply the following lemma.

\begin{lem}\label{p-mod-values} If $\alpha_0\in \Z[i]$ is odd, then $\alpha_0^2 \equiv \pm 1 \modd \t^5)$ and $\alpha_0^4 \equiv 1 \modd \t^7)$.
\end{lem}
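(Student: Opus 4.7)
The plan is to reduce the statement about a general unit $\alpha_0 \in \OKp^*$ to a statement about primary Gaussian integers, and then read off the answer directly from the structure of the groups $U_5$ and $U_7$ computed in Lemma \ref{Uk-group-iso}.

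First I would observe that every unit in $\OKp = \Z[i]_\t$ is odd (i.e., coprime to $\t$), and that every odd Gaussian integer can be written uniquely as $\alpha_0 = i^s \alpha$ where $s \in \{0,1,2,3\}$ and $\alpha$ is primary. Squaring and taking fourth powers gives $\alpha_0^2 = i^{2s}\alpha^2 = (\pm 1)\alpha^2$ and $\alpha_0^4 = i^{4s}\alpha^4 = \alpha^4$, so it suffices to prove that for every primary $\alpha \in \Z[i]$ we have $\alpha^2 \equiv 1 \pmod{\t^5}$ and $\alpha^4 \equiv 1 \pmod{\t^7}$.

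Next I would invoke Lemma \ref{Uk-group-iso}. Setting $k = 5 = 3 + 2\cdot 1$ gives $U_5 \cong (\Z/2\Z) \oplus (\Z/2\Z)$, so every element of $U_5$ has order dividing $2$; in particular $\alpha^2 \equiv 1 \pmod{\t^5}$ for every primary $\alpha$. Setting $k = 7 = 3 + 2\cdot 2$ gives $U_7 \cong (\Z/4\Z) \oplus (\Z/4\Z)$, so every element of $U_7$ has order dividing $4$; in particular $\alpha^4 \equiv 1 \pmod{\t^7}$ for every primary $\alpha$. Combining with the reduction above yields the two claimed congruences.

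There is essentially no obstacle here beyond bookkeeping; the only point that deserves care is the identification $i^{2s} = \pm 1$ (which is why $\alpha_0^2$ is only congruent to $\pm 1$, not to $1$, modulo $\t^5$) and the corresponding $i^{4s} = 1$ that makes $\alpha_0^4 \equiv 1$ cleanly. All the real content has already been done in Lemma \ref{Uk-group-iso}, and this lemma simply packages two corollaries of that structural result in the form that will be convenient for the Hensel-style manipulations in Section \ref{LSC-section}.
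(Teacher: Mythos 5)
Your proof is correct and matches the paper's intent exactly: the paper gives no separate argument, stating only that the lemma ``easily follows from Lemma \ref{Uk-group-iso},'' and your reduction of an arbitrary unit $\alpha_0 \in \OKp^*$ to $i^s\alpha$ with $\alpha$ primary, followed by reading off the exponents of $U_5 \cong (\Z/2\Z)^2$ and $U_7 \cong (\Z/4\Z)^2$, is precisely that deduction. No gaps.
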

\begin{proof} By Remark \ref{rem-mv-nv}, there exist $s \in \{0,1,2,3\}$ and $m_0, m_1, n_0, n_1 \in \{0,1\}$ such that
$$\alpha_0 \eqqqq i^s (1-4i)^{m_0 + 2 m_1} (-1-6i)^{n_0 + 2 n_1} \eqqq i^s (-3)^{m_0} (-1+2i)^{n_0}.$$
Moreover, $(-1-4i)^4 \equiv (-1-6i)^4 \equiv 1 \modd \t^7)$ and $(-3)^2 \equiv (-1+2i)^2 \equiv 1 \modd \t^5).$
\end{proof}
\begin{prop}\label{t-LSC} Let $b \in \Z[i]\backslash \{0\}$ be fourth-power-free and let $\d \in \Ok$ be a square-free divisor of $2\b$. Also let $t_b:= \vpb$, $t_\d:= \vpd$, $\b_0 := \b / \t^{t_b}$, and $\d_0 := \d / \t^{t_d}$. Then $\cdd(\KK_\t)\neq \emptyset$ if and only if one of the following conditions holds: \\
\indent (A) \hspace{.1cm}  $\ep \equiv \epd\modd 2)$ and $\b_0 \equiv \d_0( \pm 1 - \d_0 \hspace{.025cm}\t^{4k + t_b}) \modd \t^5)$ for some $k \in\{0,1,2\}$. \\
\indent (B) \hspace{.1cm} $\epd = 0$ and $b \hspace{.025cm} \t^{4 k} \equiv \d_0 ( \pm 1 - \d_0) \modd \t^5)$ for some $k \in \{0,1,2\}$.\\
\indent (C) \hspace{.1cm}  $\ep = 2 \epd$ and $\b_0 \equiv - \d_0 ( \d_0 - \alpha_0^2\t^{2 k- \epd}) \modd \t^7)$ for some $k\in\{1,2,3,4\}, \alpha_0 \in \OKp^*$.
\end{prop}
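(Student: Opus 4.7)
The plan is to split $C_d(\Q(i)_\t)$ into its two points at infinity and its affine locus $\mathcal{C}_d: dw^2 = d^2 - 4bz^4$, and to apply the multivariable Hensel's Lemma (Lemma \ref{multivar-Hensel}) to $f(T_0, T_1) = d^2 - 4bT_0^4 - dT_1^2$ exactly as in the proof of Proposition \ref{away-t-LSC}. The three conditions (A), (B), (C) will match three distinct regimes in a $\t$-adic valuation case analysis.

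The points $[0:0:\pm 2i\sqrt{b/d}:1]$ lie in $C_d(\Q(i)_\t)$ iff $\sqrt{b/d} \in \Q(i)_\t$, i.e., iff $t_b \equiv t_d \pmod 2$ and $b_0/d_0$ is a square in $\OKp^*$; by Lemma \ref{p-mod-values} this is equivalent to $b_0 \equiv \pm d_0 \pmod{\t^5}$, which is condition (A) with $k = 2$ (the term $d_0^2 \t^{4k+t_b}$ then has valuation $\geq 8$ and vanishes mod $\t^5$). For the affine locus, substitute $w = w_0 \t^W,\ z = z_0 \t^Z$ with $w_0, z_0 \in \OKp^*$ and use $-4 = \t^4$ to rewrite the equation as
\begin{equation*}
d_0 w_0^2 \t^{t_d + 2W} = d_0^2 \t^{2t_d} + b_0 z_0^4 \t^{t_b + 4 + 4Z}.
\end{equation*}
At least two of the valuations $t_d + 2W$, $2t_d$, $t_b + 4 + 4Z$ must coincide and achieve the minimum on the RHS (after absorbing any cancellation). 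The three resulting regimes are: (I) $t_d + 2W = t_b + 4 + 4Z \leq 2t_d$, which forces $t_b \equiv t_d \pmod 2$; after dividing by $\t^{t_d + 2W}$ and reducing mod $\t^5$ (using $w_0^2 \equiv \pm 1$ and $z_0^4 \equiv 1 \pmod{\t^5}$ from Lemma \ref{p-mod-values}), this gives condition (A) with $k \in \{0,1,2\}$ enumerating the admissible $Z$. (II) $t_d + 2W = 2t_d \leq t_b + 4 + 4Z$, which forces $t_d = 0$ and $W = 0$, and whose mod-$\t^5$ reduction is condition (B). (III) $2t_d = t_b + 4 + 4Z$ with cancellation between the two RHS terms, forcing $t_b = 2t_d$; here the leading-order cancellation is automatic so the genuine condition only appears mod $\t^7$, yielding (C), where $\alpha_0^2 = w_0^2$ parametrizes the unit square produced on the LHS and $2k - t_d$ indexes the order of cancellation on the RHS.

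In each regime, the forward direction reads the congruence off by inspecting the equation mod $\t^5$ (resp.\ $\t^7$), while the backward direction selects a candidate $(z_0, w_0) \in (\OKp^*)^2$ from the congruence and invokes Lemma \ref{multivar-Hensel}. The Hensel hypothesis $|f(z,w)|_\t < \|\nabla f(z,w)\|_\t^2$ is nontrivial because both derivatives $\partial f/\partial T_0 = -16bT_0^3$ and $\partial f/\partial T_1 = -2dT_1$ carry powers of $\t$ (from $16 = \t^8$ and $2 = -i\t^2$), so the required precision on the approximate solution is correspondingly higher than in the odd-prime case.

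The main obstacle is regime (III) underlying condition (C). Because the two RHS terms cancel to leading order, the condition is invisible mod $\t^5$ and the true obstruction lives one power deeper at $\t^7$; furthermore, the lifted solution's dependence on the choice of $w_0$ must be tracked through the free parameter $\alpha_0 \in \OKp^*$ with $\alpha_0^2 = w_0^2$, which explains the otherwise mysterious quantifier in (C). Secondary care is needed to rule out $w = 0$ (incompatible with $d$ square-free), to handle the boundary case $z = 0$ (which reproduces the point-at-infinity analysis), and to enumerate the finite set of valuation choices $W, Z$ in each regime, which correspond to the discrete parameters $k \in \{0,1,2\}$ in (A), (B) and $k \in \{1,2,3,4\}$ in (C).
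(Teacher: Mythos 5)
Your decomposition --- points at infinity, then a trichotomy on which two of the valuations $t_d+2W$, $2t_d$, $t_b+4+4Z$ achieve the minimum in $d_0w_0^2\t^{t_d+2W}=d_0^2\t^{2t_d}+b_0z_0^4\t^{t_b+4+4Z}$, then Hensel for the converse --- is exactly the paper's. But two concrete steps are wrong or missing. First, you cannot ``rule out $w=0$ as incompatible with $d$ square-free'': at the ramified prime this case genuinely occurs (e.g.\ $b=-1$, $d=1$, $z=1/(1+i)$, $w=0$ satisfies $dw^2=d^2-4bz^4$). The correct treatment is that $w=0$ forces $t_b=2t_d$ and $b_0z_0^4=-d_0^2$ for a unit $z_0$, which is condition (C) with the $\t$-power term vanishing (take $k=4$); dropping this case leaves a hole in the forward implication. (Likewise $z=0$ gives $d=w^2$, hence condition (B) with $d_0\equiv\pm1\pmod{\t^5}$, not the point-at-infinity analysis, which instead lands in (A).)

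Second, and more seriously, your converse for (C) does not close as described. For $h(X,Y)=b_0X^4+d_0^2-d_0Y^2\t^{2k-t_d}$ one computes $\|\nabla h(x_0,y_0)\|_\t^2=\max\{|4b_0x_0^3|_\t^2,\,|2d_0y_0\t^{2k-t_d}|_\t^2\}$, which equals $\t^{-8}$ except in the single case $t_d=k=1$; so Lemma \ref{multivar-Hensel} requires an approximate solution with $|h(x_0,y_0)|_\t\le\t^{-9}$, whereas hypothesis (C) only supplies a congruence modulo $\t^7$. Simply ``selecting a candidate $(z_0,w_0)$ from the congruence'' therefore does not satisfy the Hensel hypothesis. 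The missing idea is a precision lift: by the structure of $U_7$ and $U_9$ from Lemma \ref{Uk-group-iso}, a unit congruent to $1$ modulo $\t^7$ is a fourth power modulo $\t^9$, so one may replace $x_0=1$ by a suitable unit fourth power to upgrade the hypothesis to $b_0x_0^4\equiv-d_0(d_0-y_0^2\t^{2k-t_d})\pmod{\t^9}$, after which Hensel applies. You acknowledge that the required precision is ``higher'' than in the odd-prime case, but without this lifting mechanism the implication (C) $\Rightarrow$ $C_d(\KK_\t)\neq\emptyset$ is not proved.
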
 

\begin{proof} (\raa) Suppose  $C_d(\KK_\t) \neq \emptyset$. By footnote \ref{Cd-footnote}, $\sqrt{\b/\d} \in \KK_\t$ or there exists $(\zz,\ww) \in \KK_\t^2$ satisfying 
$$
\mathcal{C}_d: dw^2 = d^2 - 4 b z^4 = d^2 + b \t^4 z^4.
$$
Equivalently (by absorbing $\t$ into $z$), there exists $(\zz,\ww) \in \KK_\t^2$ satisfying  
$$
\mathcal{C}_d': dw^2 = d^2 + b z^4.
$$
If $\sqrt{\b/\d} = \sqrt{\b_0/\d_0} \t^{(t_b- t_d)/2}\in \KK_\t$, then we must have $t_b \equiv t_d \modd 2)$ and $\sqrt{\b_0/\d_0} \in \OKp^*$; i.e., there must exist some $y_0 \in \OKp^*$ such that $\b_0 = \d_0 y_0^2$. Thus, 
$$
\b_0 \equiv \d_0 y_0^2 \equiv \pm \d_0 \modd \t^5) 
$$
by Lemma \ref{p-mod-values}, 
so that $\sqrt{\b/\d} \in \KK_\t$ implies (A) (with $k=2$). Now suppose instead that there exists $(\zz,\ww) \in \KK_\t^2$ satisfying $\mathcal{C}_d'$. First we consider if $z=0$ or $w=0$:\\

\noindent $\underline{z=0: }$ Then there exists $w \in \Q(i)_\t$ such that $d = w^2$. Since $t_d \in \{0,1\}$, we must have $\t \nmid w$; thus, $t_d = 0$. But then Lemma \ref{p-mod-values} implies $d \equiv \pm 1 \modd \t^5).$\\

\noindent $\underline{w=0: }$  Then there exists $z \in \Q(i)_\t$ such that $b z^4 = -d^2$. If we let $z = z_0 \t^{\ZZ}$ with $\ZZ := \vpz$, then we have $t_b + 4 \ZZ = 2 t_d$ and $b_0 z_0^4 = - d_0^2$.  Since $t_b \in \{0,1,2,3\}$, $t_b \equiv 2 t_d \modd 4)$ implies $t_b = 2 t_d$. Then Lemma \ref{p-mod-values} implies $b_0 \equiv -d_0^2 \modd \t^5)$.\\

Hence, $z=0$ implies (B) (with $k=2$) and $w=0$ implies (C) (with $k=4$) by Lemma \ref{p-mod-values}. Now we can assume $z,w \neq 0$ and write $w = w_0 \t^{\WW}$ and $z = z_0 \t^{\ZZ}$ for some $w_0, z_0 \in \Z[i]_\t^*$ and $\WW, \ZZ \in \Z_{\geq 0}$. In this case, $\mathcal{C}_d$ becomes 
\begin{align}\label{val-eqn}
\d_0 w_0^2 \t^{t_d+ 2 \WW} = \d_0^2 \t^{2 t_d}+ \b_0 z_0^4 \t^{t_b + 4 \ZZ}.
\end{align} 
Note that if the minimum of the three valuations $\{t_d + 2 \WW, 2 t_d, t_b + 4\ZZ\}$ is achieved by exactly one of these valuations, then multiplying through by that minimum valuation implies $\t$ divides a unit. Alternatively, if $t_d + 2 \WW =  2 t_d =  t_b + 4\ZZ$, then we would have the contradiction
\bal 
\b_0 z_0^4 = - \d_0( \d_0 - w_0^2) \implies 1 \equiv \b_0 z_0^4\equiv -\d_0(\d_0 \pm w_0) \equiv 0 \modd \t).
\nal 
Hence, the minimum of $\{t_d + 2 \WW, 2 t_d, t_b + 4\ZZ\}$ must be achieved by exactly two of these valuations. We split into three cases according to which pair of valuations achieves the minimum. \\

\noi \underline{\textbf{Case A: }$t_d + 2 \WW = t_b + 4 \ZZ <2 t_d$.}  Then $t_d \equiv t_b \modd 2)$. If $t_b=t_d$, then $\WW = 2 \ZZ$. If $t_b = t_d +2$, then $\WW = 2 \ZZ + 1$. Dividing (\ref{val-eqn}) by $\t^{t_d + 2 \WW}$ gives us
\bal 
\b_0 z_0^4 = \d_0 (w_0^2 - \d_0 \t^{ t_d - 2\WW }).
\nal 
Since Lemma \ref{p-mod-values} shows that $z_0^4 \equiv 1, w_0^2 \equiv \pm 1 \modd \t^5)$, the above equation implies (A) (with $k = - \ZZ$ if $t_b = t_d$ and $k = - \ZZ-1$ if $t_b = t_d + 2$).\\

\noi \underline{\textbf{Case B: }$t_d + 2 \WW = 2 t_d <  t_b + 4\ZZ$.} Then $t_d = 2 \WW$, so that $t_d = \WW= 0$ since $t_d \in\{0,1\}$. Thus, $4 \ZZ > - t_b > -4$, so that $\ZZ \geq 0$. If $k:= \ZZ$, then (\ref{val-eqn}) becomes
\bal 
\b_0 z_0^4 \t^{t_b + 4 \ZZ} = b z_0^4 \t^{4k} \equiv \d_0 (w_0^2  - \d_0) \implies b\t^{4k} \equiv  \d_0 (\pm 1 - \d_0) \modd \t^5),
\nal 
since $z_0^4 \equiv 1 \modd \t^5)$ by Lemma \ref{p-mod-values}.\\ 

% If $t_b = \ZZ = 0$, then we have 
% \bal \b_0  \equiv \d_0( \pm 1 - \d_0) \modd \t^5),
% \nal 
% which is impossible since $t \nmid b_0$ and $\t \mid (\pm 1 - d_0)$. Thus, $t_b = \ZZ = 0$ cannot occur. Otherwise, one of the following three subcases occurs:
% \bal 
%  \underline{t_b + 4 \ZZ \geq 5:} &\quad  0 \equiv  \d_0 (\pm 1 - \d_0) \modd \t^5) \implies b \t^{8} \equiv d_0( \pm 1 - d_0)  \modd \t^5).\\
% \underline{t_b >0, \ZZ = 0:} &\quad  \b_0 z_0^4 \equiv \d_0 (\pm 1 - \d_0) \modd \t^5) \t^{t_b}\overset{(\ref{p-mod-values})}{\implies} b \t^0 \equiv d_0 (\pm 1 - d_0) \modd \t^5).\\
% \underline{t_b = 0,\ZZ = 1:} &\quad b_0\t^4 \equiv \pm 1 - \d_0 \modd \t^5) \implies b \t^4 \equiv d_0 ( \pm 1 - d_0)\modd \t^5).
% \nal \vspace{1\baselineskip}

\noi \underline{\textbf{Case C: }$ t_b + 4 \ZZ =2 t_d < t_d + 2 \WW$.}  Then $t_b \equiv 2 t_d \modd 4)$, which implies $t_b = 2 t_d$ and $\ZZ = 0$. Also, $t_d < 2 \WW$ implies $\WW \geq 1$. Now dividing (\ref{val-eqn}) by $\t^{2 t_d}$ gives us
\bal 
\b_0 z_0^4 = -\d_0 (\d_0 -   w_0^2 \t^{2 \WW - t_d}).
\nal 

\noi (\laa) \textbf{(A):} Suppose $t_b\equiv t_d\modd 2)$ and $\b_0 \equiv \d_0( \pm 1 - \d_0 \hspace{.025cm}\t^{t_b + 4k}) \modd \t^5)$ for some $k \in\{0,1,2\}$. Define $x_0 \in \{1,i\}$ by
$$
x_0 := \begin{cases}
    1 &\text{if }\b_0 \equiv \d_0( 1 - \d_0 \hspace{.025cm}\t^{t_b + 4k}) \modd \t^5),\\
    i &\text{if }\b_0 \equiv \d_0(-1 - \d_0 \hspace{.025cm}\t^{t_b + 4k}) \modd \t^5).   
\end{cases}
$$ 
Let $f(T):= \b_0 -\d_0 T^2 +\d_0^2 \t^{t_b +4 k} \in \OKp[T]$. Since  $\b_0 \equiv \d_0( \pm 1 - \d_0 \hspace{.025cm}\t^{t_b + 4k}) \modd \t^5)$, 
\bal 
|f'(x_0)|^2_\t = |-2 \d_0 x_0|^2_\t =|2|^2_\t= \t^{-4} > \t^{-5} \geq |f(x_0)|_\t.
\nal 
Therefore, Lemma \ref{multivar-Hensel} guarantees the existence of some $w_0 \in \OKp$ which satisfies $f(w_0) = 0$. 
Now if we define $(\zz,\ww) \in \KK_\t^2$ by
$$
(\zz,\ww) = \begin{cases} (\t^{-k}, w_0 \t^{-2k}) &\text{if }t_b = t_d,\\
 (\t^{-k-1}, w_0 \t^{-2k-1}) &\text{if }t_b = t_d + 2,
\end{cases}
$$
then $dw^2 = d^2 + b z^4$, so $(z,w)$ satisfies $\mathcal{C}_d'$.\\

\noi \textbf{(B):} Suppose $t_d = 0$ and $\b_0\t^{t_b + 4 k} \equiv \d_0 ( \pm 1 - \d_0) \modd \t^5)$ for some $k \in \{0,1,2\}$. Define
$$
x_0 := \begin{cases}
    1 &\text{if }b \hspace{.025cm} \t^{4 k} \equiv \d_0 (  1 - \d_0) \modd \t^5),\\
    i &\text{if }b \hspace{.025cm} \t^{4 k} \equiv \d_0 ( - 1 - \d_0) \modd \t^5).   
\end{cases}
$$ 
If we let $g(T):= \b_0 \t^{t_b + 4 k} - \d_0 T^2 + \d_0^2 \in \OKp[T]$, then by hypothesis,
\bal 
|g'(x_0)|_\t^2 = |-2 \d_0 x_0|_\t^2 = \t^{-4} >\t^{-5} \geq |g(x_0)|_\t.
\nal 
Hence, there exists $w_0 \in \OKp$ such that $g(w_0) = 0$ by Lemma \ref{multivar-Hensel}. If $(\zz,\ww):= (\t^k, w_0) \in \OKp^2$, then
\bal 
d w^2 = \d_0 w_0^2 = \d_0^2 + \b_0 \t^{t_b + 4k} = d^2 + b z^4. 
\nal 

\noi \textbf{(C):} Suppose $t_b  = 2 t_d$ and $\b_0 \equiv - \d_0 ( \d_0 - y_0^2\t^{2 k- t_d}) \modd \t^7)$ for some $1 \leq k\leq 4$ and some $y_0 \in \OKp^*$. We claim that this implies 
$$
\b_0 x_0^4 \equiv - \d_0 ( \d_0 - y_0^2\t^{2 k- t_d}) \modd \t^9) \quad \text{for some }x_0,y_0 \in \OKp^* \text{ and }k \in \{1,2,3,4,5\}.
$$
To see this, let $A := -d_0 b_0^{-1} ( \d_0 - y_0^2\t^{2 k- t_d}) \in \Z[i]_\t^*$. Then Remark \ref{rem-mv-nv} shows that
$$
A \eqqqq (1-4i)^{m_A} (-1-6i)^{n_A} \eqqqq 1 \implies m_A \eqq n_A \eqq 0.
$$
Thus, $m_A = 4 m'$ and $n_A = 4n'$ for some $m,n \in \Z_{\geq 0}$, so that 
$$
A \eqqqqq (1-4i)^{m_A} (-1-6i)^{n_A} =  \big((1-4i)^m (-1-6i)^n \big)^4.
$$
Therefore, setting $x_0 = (1-4i)^m (-1-6i)^n$ proves the claim. Now we let $h(T_0,T_1):= \b_0 T_0^4 + \d_0^2 - \d_0 T_1^2 \t^{2k - t_d} \in \OKp[T_0,T_1]$. Then we have 
\begin{align}\label{C-6-8}
||\nabla h (x_0, y_0)||_\t^2 &= \text{max} \big\{|4 \b_0 x_0^3|^2_\t, |-2 \d_0 y_0 \t^{2k - t_d}|_\t^2 \big\} \nonumber \\
&= \text{max} \{\t^{-8},   \t^{-4(k+1) +2 t_d}\} = \begin{cases} 
\t^{-6} &\text{if }t_d = k = 1,\\
\t^{-8} &\text{else}.
\end{cases}
\end{align}
We also have $|h(x_0, y_0)|_\t\leq \t^{-9}$, so $|h(x_0, y_0)|_\p < ||\nabla h (x_0, y_0)||_\t^2  $. Therefore, there exists some $(z_0, w_0) \in \OKp^2$ satisfying $h(z_0, w_0) = 0$ by Lemma \ref{multivar-Hensel}. If $(\zz,\ww):= (z_0, w_0 \t^k)$, then  
\bal 
d w^2 = \d_0 w_0^2 \t^{ 2k - t_d} \t^{2 t_d} = ( \b_0 z_0^4 + \d_0) \t^{t_b} = d^2 + b z^4.
\nal 

\end{proof}
\section{\texorpdfstring{Graph-theoretic Interpretation of $\mathrm{S}^{(\varphi)}(E_b/\mathbb{Q}(i))$}{Graph-theoretic Interpretation of S(phi)(Eb/Q(i))}}\label{section-Graph-theory}
For this section and the next, we fix a fourth-power-free $b \in \Z[i]$, which can be uniquely factored as
\begin{align}\label{b-factorization}
b = i^{s_b} (1+i)^{t_b} \p_1^{r_1}\cdots \p_M^{r_M}\q_1^2 \cdots \q_N^2 \quad \text{for some $s_b, t_b\in \{0,1,2,3\}$, $r_i \in \{1,3\}$,}
\end{align}
with each $\p_i, \q_j$ a primary Gaussian prime.

We associate to $b$ a directed, weighted graph $G_b$ which has vertices $V(G_b) = \{\p_1, \dots, \p_M, \q_1, \dots, \q_N\}$. The edge from a vertex $v$ to a different vertex $w$ is labeled by 
$$
\edge{v}{w} := \text{log}_i \LEG{w}{v} \in \Z/4\Z.
$$
We also let $\edge{v}{v} := 0$, so the graph does not have any loops. 

We associate to a square-free divisor $\d\in \Z[i]$ of $2\b$ the partition 
$$V(G_b) = V_{b/d}^{(1,3)} \sqcup V_{b/d}^{(2)} \sqcup V_d^{(1,3)} \sqcup V_d^{(2)}$$ of the vertices of $G_b$, where 
\bal
V_{b/d}^{(1,3)} &:= \{\p_i \mid b: \p_i \nmid d\}, &&V_{b/d}^{(2)} := \{\q_j \mid b: \q_j \nmid d\}, \\
V_d^{(1,3)} &:= \{\p_i \mid b: \p_i \mid d\},  &&V_d^{(2)} := \{\q_j \mid b: \q_j \mid d\}.
\nal 
Occasionally, we also write $V_d := V_d^{(1,3)} \sqcup V_d^{(2)}$ and similarly $V_{b/d} := V_{b/d}^{(1,3)} \sqcup V_{b/d}^{(2)}$.

It is convenient to define various out-degree functions on the vertices of $G_b$. For $v \in V(G_b)$, $\spadesuit \in \{ d, b/d\}$, and $\heartsuit \in \{(1,3), (2)\}$ we define
$$
\text{deg}_\spadesuit^{\heartsuit}(v) := \sum_{w \in V_{\spadesuit}^{\heartsuit}} \edge{v}{w} \in \Z/4\Z.
$$
If we omit the superscript or subscript of this degree function, then we remove the constraint on the vertices $w$. In other words, we also define 
\bal
\text{deg}_d(v) &:= \sum_{w\in V_d} \edge{v}{w}\in \Z/4\Z, \quad  \qquad \text{deg}_{b/d}(v) := \sum_{w \in V_{b/d}} \edge{v}{w} \in \Z/4\Z,\\
\text{deg}^{(k)}(v) &:=\hspace{-.24cm}  \sum_{\substack{w \in V(G_b),\\ \text{v}_w(b) = k}} \hspace{-.3cm} \edge{v}{w}\in \Z/4\Z \hspace{.15cm} \text{ for } k\in\{1,2,3\}, \\\text{deg}^{(1,3)}(v) &:= \text{deg}^{(1)}(v) + \text{deg}^{(3)}(v).
\nal 
The following proposition reformulates the conditions on $d\mid 2b$ in Proposition \ref{away-t-LSC} to conditions on the partition $V_{b/d}^{(1,3)} \sqcup V_{b/d}^{(2)} \sqcup V_d^{(1,3)} \sqcup V_d^{(2)}$ of $V(G_b)$. Recall from Remark \ref{rem-mv-nv} that for every primary Gaussian prime $\p$, there exist unique $m_\p, n_\p \in \Z/4\Z$ such that 
\bal 
\p \equiv (1-4i)^{m_\p} (-1-6i)^{n_\p} \modd \t^7).
\nal 
Moreover, Lemma \ref{lem-special-values} shows that 
\bal 
\left[\frac{1+i}{\p}\right]_{4} = i^{m_\p} \quad \text{ and } \quad \left[\frac{i}{\p}\right]_{4} = i^{n_\p} .
\nal 
\begin{prop}\label{Graph-theory-prop} Let $d\in \Z[i]$ be a square-free divisor of $2b$, so it can be uniquely factored
\begin{align}\label{d-factorization}
    d = i^{s_d} \ \t^{t_d} \prod_{\p_i \in V_d^{(1,3)}} \p_i \prod_{\q_j \in V_d^{(2)}} \q_j \quad \text{for some } s_d, t_d \in \{0,1\},
\end{align}
and some subsets $V_d^{(1,3)} \subset \{\p_1,\dots,\p_M\}$ and $V_d^{(2)} \subset \{\q_1, \dots, \q_N\}$. Then $d$ satisfies LSC away from $1+i$ if and only if the partition $V(G_b) = V_{b/d}^{(1,3)} \sqcup V_{b/d}^{(2)} \sqcup V_d^{(1,3)} \sqcup V_d^{(2)}$ satisfies each of the following conditions:
\begin{enumerate}
    \item For every $\p \in V_{b/d}^{(1,3)}$, $$\deg_{d}(\p) \Eq m_{\p} t_d + n_{\p} s_d.$$ 
    \item For every $\p\in V_d^{(1,3)},$ $$\deg_{\b/\d}(\p)  \Eq \Degg(\p) + m_{\p}(t_b + t_d) + n_{\p} (s_b+ s_d).$$
    \item For every $\q \in V_{b/d}^{(2)}$, 
    $$\deg_d(\q) \Eq m_{\q} t_d + n_{\q} s_d \quad \text{ \textbf{or} } \quad \deg_d(\q) \Eq \Degggg(\q) + m_{\q}(t_b + t_d) + n_{\q} (t_b +t_d).$$
    \item For every $\q \in V_{d}^{(2)}$, 
    \bal 
    \Degggg(\q)  \Eq m_\q \ep + n_\q s_b \quad \text{\textbf{and}}
    \nal 
    \bal
    \deg_{\b/\d}(\q) \Eq  \Deg(\q) + m_{\q} t_d + n_{\q}(s_d + 1)+\frac{1}{2}\big(\deg^{(1,3)}(\q) + m_{\q}\ep + n_{\q} s_b \big).
    \nal 
\end{enumerate}
\end{prop}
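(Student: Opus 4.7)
The plan is to derive each of the four conditions directly from its counterpart in Proposition \ref{away-t-LSC} by taking $\log_i$ of the relevant quartic symbol, applying multiplicativity over $\Z[i]$, and using Lemma \ref{lem-special-values} to replace $\log_i \LEG{\t}{v}$ by $m_v$ and $\log_i \LEG{i}{v}$ by $n_v$. Writing out the unique factorizations of $b$ and $d$ from (\ref{b-factorization}) and (\ref{d-factorization}), each local condition at an odd prime $v$ becomes a linear identity modulo $2$ or $4$; the odd-prime piece of this identity collects into partial graph-degree sums, which I then recognize as combinations of $\deg_d$, $\deg_{b/d}$, $\Deg$, $\Degg$, and $\Degggg$.

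Conditions (1), (2), and (3) correspond to cases (2), (4), and (3) of Proposition \ref{away-t-LSC}, and each concerns a squared quartic symbol, so working modulo $2$ suffices. For (1), reading $\LEGG{d}{\p} = 1$ as $\log_i \LEG{d}{\p} \eq 0$ immediately gives $n_\p s_d + m_\p t_d + \deg_d(\p) \eq 0$, since every odd prime divisor of $d$ has exponent $1$ and the self-edge $\edge{\p}{\p} = 0$ is harmless. For (2), $b/d$ carries $\p^{r_i - 1}$ with $r_i - 1 \in \{0, 2\}$ even, so that factor is killed modulo $2$; the surviving exponents $r_j \eq 1$ on $\p_j \in V_{b/d}^{(1,3)}$ and $2 - \mathbbm{1}[\q_k \in V_d] \eq \mathbbm{1}[\q_k \in V_d]$ on $\q_k$ recombine to yield $\deg_{b/d}(\p) + \Degg(\p)$ modulo $2$. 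Condition (3) is this same analysis applied once to $\LEGG{d}{\q}$ and once to $\LEGG{b/d}{\q}$ for $\q \in V_{b/d}^{(2)}$, matching the disjunction of case (3); the identity $\deg_d(\q) + \Degggg(\q) \eq \sum_{\p_j \in V_{b/d}^{(1,3)}} \edge{\q}{\p_j} + \sum_{\q_k \in V_d^{(2)}} \edge{\q}{\q_k}$ lets the second disjunct be rewritten in the required form.

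The main obstacle is condition (4), coming from case (5) of Proposition \ref{away-t-LSC}: $\LEG{b/d^2}{\q} = (-1)^{n_\q}$. This is an equality of genuine (not squared) quartic symbols, so $\log_i \LEG{b/d^2}{\q}$ must be computed modulo $4$ and set equal to $2 n_\q$. Using $-2 \equiv 2 \pmod 4$ to absorb the sign changes from the exponent shifts $-2 s_d$, $-2 t_d$, and $-2\mathbbm{1}[\p_j \in V_d]$, one obtains $\log_i \LEG{b/d^2}{\q} \eqq s_b n_\q + t_b m_\q + \Degggg(\q) + 2 X$, where $X = s_d n_\q + t_d m_\q + \Deggg(\q) + \deg_d^{(1,3)}(\q) + \sum_{\q_k \in V_{b/d}^{(2)}} \edge{\q}{\q_k}$; the $\Deggg$ term arises by splitting $\sum_j r_j \edge{\q}{\p_j}$ according to $r_j \in \{1, 3\}$ and rewriting $\Deg(\q) + 3\,\Deggg(\q) \eqq \Degggg(\q) + 2 \Deggg(\q)$. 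Reducing modulo $2$ yields the first sub-condition $\Degggg(\q) \eq m_\q t_b + n_\q s_b$, which is exactly the requirement that $\LEG{b/d^2}{\q} \in \{\pm 1\}$. Given this parity, the integer $\Degggg(\q) + m_\q t_b + n_\q s_b$ is even, so dividing the mod-$4$ congruence by $2$, equating the result to $n_\q$ modulo $2$, and then applying both $\Deggg(\q) + \Degggg(\q) \eq \Deg(\q)$ and the regrouping identity $\deg_d^{(1,3)}(\q) + \sum_{\q_k \in V_{b/d}^{(2)}} \edge{\q}{\q_k} \eq \deg_{b/d}(\q) + \Degggg(\q)$ produces the stated second sub-condition, in which the canonical half-integer $\frac{1}{2}(\Degggg(\q) + m_\q t_b + n_\q s_b)$ naturally appears. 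The bulk of the work lies in this mod-$4$ bookkeeping --- the $r_j \in \{1, 3\}$ split, the sign absorptions, and the final regroupings --- with Remark \ref{quartic-rec-rem} (mod-$4$ quartic reciprocity) available should any vertex-direction swaps of $\edge{\q}{\q_k}$ be needed.
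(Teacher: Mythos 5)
Your proof is correct and follows essentially the same route as the paper's: expand each quartic symbol multiplicatively over the factorizations of $b$ and $d$, substitute $m_v, n_v$ for the supplementary symbols via Lemma \ref{lem-special-values}, and in case (4) extract the mod-2 subcondition from the mod-4 congruence before dividing by 2. (Incidentally, your second disjunct in condition (3) with $n_\q(s_b+s_d)$ is the correct form; the $n_\q(t_b+t_d)$ in the proposition statement appears to be a typo, cf. Lemma \ref{Lb2-lem}\textit{(iii)}.)
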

\begin{proof} This proof is a straightforward translation of the local solubility conditions from Proposition \ref{away-t-LSC} to conditions on the degrees of the vertices of $G_b$. For example, for $\p \in V_{b/d}^{(1,3)}$ (i.e., $\p \nmid d$), we will show that $\LEGG{\d}{\p} = 1$ if and only if $\deg_{d}(\p) \equiv m_{\p} t_d + n_{\p} s_d \modd 2).$\\

\noi \textbf{Case 1}: Suppose $\p\in V_{\b/\d}^{(1,3)}$. Since the Gaussian quartic symbol is multiplicative,
\bal 
\LEGG{d}{\p} &=\LEG{i}{\p}^{2 s_d} \LEG{\t}{\p}^{2 t_d}\prod_{\p_i\in V_{d}^{(1,3)}}\LEG{\p_i}{\p}^{2r_i} \prod_{\q_j\in V_{d}^{(2)}}\LEGG{\q_j}{\p},
\nal 
Therefore, we see that 
\bal 
\LEGG{d}{\p} =1 \iff \LEG{i}{\p}^{2 s_d} + \log_{i}\LEG{\t}{\p} ^{2t_d}+2\sum_{\p_i\in V_{d}^{(1,3)}}r_i \edge{\p}{\p_i}+2\sum_{\q_j\in V_{d}^{(2)}}\edge{\p}{\q_j} \Eqq 0. 
\nal 
Since $2 r_i \equiv 2 \modd 4)$ and $\log_i \LEG{i}{\p} \equiv n_\p$, $\log_i \LEG{\t}{\p} \equiv m_\p$ by Lemma \ref{lem-special-values}, this simplifies to 
\bal 
\LEGG{d}{\p} = 1 \iff  2\big(m_\p s_d + n_\p t_d + \text{deg}_d(\p) \big) \Eqq 0\iff  m_\p s_d + n_\p t_d + \text{deg}_d(\p) \Eq 0.
\nal 

\noindent \textbf{Case 2}: Suppose $\p\in V_{d}^{(1,3)}$. Then we write out
\bal 
\LEGG{b/d}{\p} = \LEG{i}{\p}^{2(s_b-s_d)} \LEG{\t}{\p}^{2(t_b-t_d)} \prod_{\p_i \in V_{b/d}^{(1,3)} } \LEGG{\p_i}{\p} \prod_{\q_j \in V_d^{(2)}} \LEGG{\q_j}{\p}.
\nal 
Analogous to Case 1, we conclude from Lemma \ref{lem-special-values} that
\bal 
\LEGG{b/d}{\p} = 1 
% &\iff 2 \big( n_\p(s_b-s_d) + m_\p (t_b-t_d) + \text{deg}_{b/d}^{(1,3)}(\p) + \text{deg}_d^{(2)}(\p) \big) \Eqq 0\\
&\iff   n_\p(s_b+s_d) + m_\p (t_b+t_d) + \text{deg}_{b/d}^{(1,3)}(\p) + \text{deg}_d^{(2)}(\p) \Eq 0.
\nal

\noindent \textbf{Case 3}: For $\q \in V_{b/d}^{(2)}$, the desired statement follows immediately from the previous two cases and Proposition \ref{away-t-LSC}, along with the observation that 
\bal
\text{deg}_{b/d}(\q) + \text{deg}^{(2)}(\q) \equiv \text{deg}_d(\q) + \text{deg}^{(1,3)}(\q) \modd 2).
\nal 

\noindent \textbf{Case 4}: Suppose $\q\in V_{d}^{(2)}$. Then we have
\bal 
\LEG{b/d^2}{\q} = \LEG{i}{\q}^{s_b - 2 s_d} \LEG{\t}{\q}^{t_b - 2 t_d} \prod_{\p_i \in V_{b/d}^{(1,3)}} \LEG{\p_i}{\q}^{r_i} \prod_{\p_i \in V_d^{(1,3)}}\LEG{\p_i}{\q}^{r_i -2} \prod_{\q_j \in V_{b/d}^{(2)}}\LEGG{\q_j}{\q}. 
\nal 
Therefore, we have
\begin{footnotesize}
\bal 
\LEG{b/d^2}{\q} = i^{2 n_\q} 
&\iff n_\q(s_b-2s_d) + m_\q(t_b - 2 t_d) +  \hspace{-.33cm}\sum_{\p_i \in V_{b/d}^{(1,3)}}  \hspace{-.3cm} r_i \edge{\q}{\p_i} + \hspace{-.33cm} \sum_{\p_i \in V_{d}^{(1,3)}}  \hspace{-.3cm}(r_i-2) \edge{\q}{\p_i} +  2 \hspace{-.25cm} \sum_{\q_j \in V_{b/d}^{(2)}}  \hspace{-.3cm}\edge{\q}{\p_i} \Eqq 2 n_\q\nal 
\bal 
\iff n_\q(s_b-2s_d) + m_\q(t_b - 2 t_d) +  \Deg_{\b/\d}(\q) +3 \Deggg_{\b/\d}(\q) +3\Deg_d(\q) + \Deggg_d(\q) + 2\Degg_{\b/\d} (\q) \Eqq 2 n_\q 
\nal 
\end{footnotesize} 
This implies $\Degggg(\q)  + m_\q \ep + n_\q s_b$ is even, as desired. Moreover, it allows us to rewrite the above mod 4 condition as a mod 2 condition:
 \bal 
 n_\q(s_b-2s_d) + m_\q(t_b - 2 t_d) +  \Deg_{\b/\d}(\q) +3 \Deggg_{\b/\d}(\q) +3\Deg_d(\q) + \Deggg_d(\q) + 2\Degg_{\b/\d} (\q) \Eqq 2 n_\q 
 \nal 
\bal 
&\iff\frac{1}{2}\big(\deg^{(1,3)}(\q) + m_\q \ep + n_\q s_b  \big) +  m_\q \epd + n_{\q}\etad + \Deggg_{\b/\d}(\q) +\Deg_d(\q)  + \Degg_{\b/\d} (\q) \Eq n_\q.
\nal 
\end{proof}
\section{\texorpdfstring{Linear-Algebraic Interpretation of $\mathrm{S}^{(\varphi)}(E_b/\mathbb{Q}(i))$}{Linear-Algebraic Interpretation of S(phi)(Eb/Q(i))}}\label{section-Linear-algebra}
\subsection{\texorpdfstring{Local solubility conditions in terms of modified Laplacian matrix \( L_b' \)}{Local solubility conditions in terms of modified Laplacian matrix Lb'}}
The goal of this section is to translate the conditions of Proposition \ref{Graph-theory-prop} to a matrix equation whose solutions correspond to those square-free divisors $d\mid 2b$ which satisfy LSC away from $1+i$. The \textit{adjacency matrix} $A(G_b) \in \text{Mat}_{M+N}(\Z/4\Z)$ is defined by
$$
A(G_b)_{v,w} := \edge{v}{w} \quad \text{for all }v,w \in V(G_b).
$$
The \textit{Laplacian matrix}  $L(G_b) \in \text{Mat}_{M+N}(\Z/4\Z)$ is defined by 
$$
L(G_b) := \text{diag}\big(\text{deg}(\p_1),\dots,\text{deg}(\p_M),\text{deg}(\q_1),\dots,\text{deg}(\q_N)\big) - A(G_b). 
$$
To a square-free divisor $d\mid 2b$, we associate the vector $\vecd$ over $\F_2$, with $v$-th coordinate 
$$
(\vecd)_v := \mathbbm{1}_{v\mid d} := \begin{cases} 1 &\text{if }v \mid d,\\
0 &\text{if }v \nmid d.
\end{cases}
$$
\begin{prop}\label{Lb-prop} Let \( L_b^{(1)} \in \text{Mat}_{M+N}(\F_2) \) denote the reduction of \( L(G_b) \) modulo \( 2 \); i.e., 
\[
(L_b^{(1)})_{v,w} \equiv L(G_b)_{v,w} \modd 2)\quad \text{for all \( v,w \).}
\]
Define \( L_b^{(2)} \) to be the submatrix of \( L_b^{(1)} \) obtained by deleting each row and column corresponding to a prime \( \q_j \) for which
\[
\deg^{(1,3)}(\q_j) \not\equiv m_{\mathfrak{q}_j} t_b + n_{\q_j} s_b \modd 2).
\]
Finally, set \( L_b' := L_b^{(2)} + \operatorname{diag}(\vec{c} \hspace{.05cm})\), where \( \vec{c} \in \mathbb{F}_2 \) is defined by
\bal 
\vec{c}_{\mathfrak{p}_i} &:= \deg^{(2)}(\mathfrak{p}_i) + m_{\mathfrak{p}_i} t_b+ n_{\mathfrak{p}_i} s_b \qquad \text{for all } \mathfrak{p}_i \mid b,\\
\vec{c}_{\mathfrak{q}_j} &:= \frac{\deg^{(1,3)}(\mathfrak{q}_j) + m_{\mathfrak{q}_j} t_b + n_{\mathfrak{q}_j} s_b}{2} + \deg^{(1)}(\mathfrak{q}_j) + n_{\mathfrak{q}_j} \qquad \text{for all } \mathfrak{q}_j \mid b.
\nal 
Let $d\in \Z[i]$ be a square-free divisor of $2b$, so that it can be factored as in (\ref{d-factorization}). Then $d$ satisfies LSC away from $1+i$ if and only if the following two conditions are satisfied:
\begin{enumerate}
\item  If $\text{deg}^{(1,3)}(\q_j) \not\equiv  m_{\q_j} t_b + n_{\q_j} s_b \modd 2)$, then $\q_j \nmid d$.
\item $L_b' \cdot \vecd = \vecyst$, where $\vecyst$ is the vector over $\F_2$ defined by 
\bal 
\vecyst_v \equiv m_v t_d + n_v s_d \modd 2) \quad \text{for all }v.
\nal 
\end{enumerate}
\end{prop}
\begin{proof} Condition (1) follows from Proposition \ref{Graph-theory-prop} (4). Suppose $d$ is a square-free divisor of $2b$ and satisfies (1). To verify condition (2), we will show for all $v$ that $$(L_b' \cdot \vecd)_v= \vecyst_v $$
holds if and only if Proposition \ref{Graph-theory-prop} holds for $v$. Let $v$ be a primary divisor of $b$ such that $\text{deg}^{(1,3)}(\q_j) \equiv m_{\q_j} t_b + n_{\q_j}s_b \modd 2)$ if $v = \q_j$. Since $d$ satisfies (1), note that
\bal 
(L_b' \cdot \vecd)_v &= (\text{deg}(v) + \vec{c}_v)\one_{v\mid d} \; + \hspace{-.3cm}  \sum_{u \in V_{b/d}\backslash\{v\}} \hspace{-.5cm}\edge{v}{u} \one_{u\mid d} \; + \hspace{-.5cm}  \sum_{\p \in V_{d}^{(1,3)}\backslash\{v\}} \hspace{-.5cm}\edge{v}{\p} \one_{\p \mid d} \; + \hspace{-1cm}\sum_{\substack{\q \in V_{d}^{(2)}\backslash\{v\},\\ \text{deg}^{(1,3)}(\q) \equiv m_{\q} t_b + n_{\q}s_b}} \hspace{-1.2cm}\edge{v}{\q} \one_{\q \mid d}\\
&= (\text{deg}(v) + \vec{c}_v)\one_{v\mid d} + \text{deg}_d(v)\\
&\eq \begin{cases}
    \text{deg}_d(v) &\text{if }v \nmid d,\\
    \text{deg}_{b/d}(v) + \vec{c}_v &\text{if }v \mid d.
\end{cases}
\nal 
The result follows by considering the cases $v = \p_i \nmid d, \; v = \p_i \mid d, \; v = \q_j \nmid d$, and $v = \q_j \mid d$, upon noting for $v = \q_j$ that $\text{deg}^{(1,3)}(\q_j) \equiv m_{\q_j} t_b + n_{\q_j}s_b \modd 2)$ implies
\bal 
\big[\deg_d(\q_j) \Eq m_{\q_j} t_d + n_{\q_j} s_d \quad \text{ \textbf{or} } \quad \deg_d(\q_j) \Eq \Degggg(\q) + m_{\q_j}(t_b + t_d) + n_{\q_j} (t_b +t_d)\big]
\nal 
\vspace{-\baselineskip}
\bal 
\iff \deg_d(\q_j) \Eq m_{\q_j} t_d + n_{\q_j} s_d.
\nal 
\end{proof}
\subsection{\texorpdfstring{The main algorithm: the \( \varphi \)-Selmer group in terms of \( L_b' \)}{The main algorithm: the phi-Selmer group in terms of Lb'}} In summary, we have constructed a matrix \( L_b' \) over \( \F_2 \), depending only on \( b \), and a vector \( \vecyst \) over \( \F_2 \), determined by the values of \( s_d \) and \( t_d \) for a square-free divisor \( d \mid 2b \), such that solutions to the equation \( L_b' \cdot \vecd = \vecyst \) correspond exactly to the square-free divisors \( d \mid 2b \) that satisfy LSC away from \( 1+i \). To compute $\text{S}^{(\varphi)}(E_b/\Q(i))$, one first determines which vectors $\vecd$ satisfy one of the following four equations:
\[
L_b' \cdot \vecd = \vecyst, \quad \text{for } (s_d, t_d) \in \{(0,0), (1,0), (0,1), (1,1)\}.
\]
Each solution specifies the primary part \( d_0^+ := i^{-s_d} \t^{-t_d} d \) of a square-free divisor \( d \mid 2b \). Among these, \( d \) lies in $\SelEE$ if and only if it also satisfies LSC at \( 1+i \); see Proposition~\ref{t-LSC}. This is summarized in the following theorem, which is the main result of this paper.

\begin{thm}\label{main-thm} Let $\mathfrak{t} := 1 + i$. Let $b \in \Z[i]$ be fourth-power-free, so that
$$
b = i^{s_b}\mathfrak{t}^{t_b} \p_1^{r_1}\cdots \p_M^{r_M} \q_1^2 \cdots \q_N^2 \quad  \text{ for some }s_b, t_b \in \{0,1,2,3\}, r_i \in \{1,3\},
$$
with each $\p_i,\q_j \in \Z[i]$ a primary prime. Construct the square matrix $L_b'$ over $\mathbb{F}_2$ as follows:
\begin{itemize}
    \item $L_b^{(1)} \equiv L(G_b) \modd 2)$. 
    % i.e. $L_b^{(1)} = \{e_{i,j} \text{ mod }2\}_{1\leq i,j\leq N} + \text{Diag}(\text{deg}(\p_i))_{1\leq i \leq N}.$ 
    \item $L_b^{(2)}$ is obtained by deleting every $\q_j$-th row and column of $L_b^{(1)}$ for which 
    $$
    \text{deg}^{(1,3)}(\q_j) \not\equiv m_{\q_j} t_b + n_{\q_j} s_b \modd 2).
    $$
    \item $L_b^{(3)} $ is obtained by adding $1$ to the $\p_i$-th diagonal element of $L_b^{(2)}$ if 
    $$
    \text{deg}^{(2)}(\p_i) \not\equiv m_{\p_i} t_b + n_{\p_i} s_b \modd 2).
    $$
    \item $L_b'$ is obtained by adding $1$ to the $\q_j$-th diagonal element of $L_b^{(3)}$ if 
    $$
    \text{deg}^{(1)}(\q_j) + 3\text{deg}^{(3)}(\q_j) \equiv m_{\q_j} t_b + n_{\q_j} s_b+ 2(n_{\q_j} +1) \modd 4).
    $$
\end{itemize}
Let $d \in \Z[i]$ be a square-free divisor of $2b$, so that 
$$
d = i^{s_d}(1+i)^{t_d} \prod_{ \v \in V_d} \v \quad \text{ for some }s_d, t_d \in \{0,1\}, V_d \subset\{\p_1,\dots,\p_M,\q_1,\dots,\q_N\}. 
$$
Then $d \in \text{S}^{(\varphi)}(E_b/\Q(i))$ if and only if both the following conditions hold:
\begin{enumerate}[label=(5.2.\arabic*)]
    \item\label{main-thm-lsc-away-t} \textit{(LSC away from $1+i$):} For all $\q_j \mid d$, we have $$\text{deg}^{(1,3)}(\q_j) \equiv  m_{\q_j} t_b + n_{\q_j} s_b \modd 2).$$
    In addition,
    $$L_b' \cdot \vecd = \vecyst,\quad \text{where } \vecyst_v := m_v t_d + n_v s_d \in \F_2.$$
    \item\label{main-thm-lsc-at-t} \textit{(LSC at $1+i$):} One of the following equations is true: 
   \begin{enumerate}[label=(\Alph*)]
     \item\label{thm-LSC-A} $t_b \equiv t_d \mm$ and $b_0 \equiv \pm d_0 - d_0^2 \mathfrak{t}^{4k+t_b} \MM$ for some $k\in \Z_{\geq 0}$,
     \item\label{thm-LSC-B} $t_d = 0$ and $b \mathfrak{t}^{4k} \equiv \pm d_0 - d_0^2 \MM$ for some $k \in \Z_{\geq 0}$,
     \item\label{thm-LSC-C} $t_b = 2 t_d$ and $b_0  \equiv  d \alpha^2 - d_0^2  \MMM$ for some $\alpha \in \Z[i]$,
 \end{enumerate}\vspace{.5\baselineskip}
 where $b_0 := b /\mathfrak{t}^{t_b}$ and $d_0 := d/\mathfrak{t}^{t_d}$ as usual. 
\end{enumerate}
\end{thm}
\begin{proof} Adding the diagonal vector $\vec{c}$ of Proposition \ref{Lb-prop} is equivalent to adding 1's to the diagonal as specified in this theorem. Condition (2) is equivalent to Proposition \ref{t-LSC}.
\end{proof}
\begin{rem}\label{rem-simplified-algorithm} Note that much of the complexity in computing $\text{S}^{(\varphi)}(E_b/\Q(i))$ arises from the interaction between primes dividing \(b\) with odd valuation (the \(\mathfrak{p}_i\)) and those with even valuation (the \(\mathfrak{q}_j\)). However, if $b_0$ is square-free, i.e. $b = i^{s_b} \t ^{t_b} \p_1^{r_1} \cdots \p_M^{r_M}$, then
\bal 
L_b' \eq L(G_b) + \text{diag}( m_{\p_1} t_b + n_{\p_1} s_b, \dots, m_{\p_M} t_b + n_{\p_M} s_b).
\nal 
At the other extreme, if $b$ is a square, i.e. $b = i^{2s_{b,1}} \t^{2 t_{b,1}} \q_1^2 \cdots \q_N^2$ with $s_{b,1}, t_{b,1}\in \{0,1\}$, then 
$$
L_b' \eq L(G_b) + \text{diag}\big(m_{\q_1} t_{b,1} + n_{\q_1}(s_{b,1}+1), \dots, m_{\q_N} t_{b,1} + n_{\q_N}(s_{b,1}+1)\big).
$$
\end{rem}
\begin{cor}\label{cor-simplified-alg-mod-1} If all of the odd primes dividing $b$ are congruent to 1 modulo $(1+i)^7$, then $L_b^{(2)}$ is obtained from $L_b^{(1)}$ by deleting the $\q_j$-th rows and columns with $\text{deg}^{(1,3)}(\q_j)$ odd, and $L_b' = L_b^{(2)} + \text{Diag}(\vec{c}\hspace{.05cm})$, where $\vec{c}_{\p_i} := \text{deg}^{(2)} (\p_i)$ for all $\p_i$ and  $\vec{c}_{\q_j} := \text{deg}^{(1)}(\q_j) + \text{deg}^{(1,3)}(\q_j)/2$ for all $\q_j$ with $\text{deg}^{(1,3)}(\q_j)$ even. Moreover, if we let 
 \bal 
 N':= \#\{ \v \mid b \text{ primary}: \text{v}_\v(b) =2 \text{ and }\text{deg}^{(1,3)}(\v) \text{ is even}\},
 \nal 
 and $R' := \text{rk}_{\F_2}(L_b')$, then
 \bal 
\#\text{S}^{(\varphi)}(E_b/\Q(i)) = \begin{cases}
2^{N'-R' +2} &\text{if } (s_b, t_b) \in \{(0,1),\ (3,1)\},\\
2^{N'-R'+1} &\text{else}.
\end{cases}
 \nal 
 \end{cor}
 \begin{proof} The description of $L_b'$, along with the fact that $\vecyst = \vec{0}$, follows immediately from the observation that
 \begin{align}\label{small-eqn-lem}\v \equiv 1 \modd \t^7) \iff m_\v \equiv n_\v \equiv 0\modd 4)\end{align}
by Remark \ref{rem-mv-nv}. Since $\vecyst = \vec{0}$, condition \ref{main-thm-lsc-away-t} is satisfied if and only if $\vecd \in \text{ker}(L_b')$. As $\#\text{ker}(L_b') = 2^{N'-R'}$ by the rank-nullity theorem, this gives $2^{N'-R'}$ possibilities for $\vecd$. Note that \( \vecd \) determines \( d_0^+ := i^{-s_d} \t^{-t_d} d \), while the LSC at $1+i$ constraint determines \( (s_d, t_d) \), but is independent of \( d_0^+ \) by (\ref{small-eqn-lem}). The description of $\text{S}^{(\varphi)}(E_b/\Q(i))$ follows from the fact that there are exactly two $(s_d,t_d)$ which satisfy LSC at $1+i$ \ref{main-thm-lsc-at-t} whenever $(s_b, t_b) \neq (0,1), (3,1)$, while all $(s_d,t_d)$ satisfy LSC at $1+i$ when $(s_b, t_b) = (0,1), (3,1)$. Thus, for each of the $2^{N'-R'}$ many $d_0^+$ corresponding to some $\vecd \in \text{ker}(L_b')$, there are exactly two or four $d = i^{s_b} \t^{t_b} d_0^+$ which satisfy LSC at $1+i$, according to $(s_b, t_b)$. 
 \end{proof}
 \section{Examples and applications}\label{section-applications}
\subsection{An explicit example} We will apply Theorem \ref{main-thm} to determine $\text{S}^{(\varphi)}(E_b/\Q(i))$ when
$$b = i \, (1 + i)^2 \, (-1 + 2i)\, (-127)^3\, (-7 + 12i)^3 \, (-103)^2  \, (9 - 4i)^2.$$ 
We have $s_b = 1$ and $t_b = 2$. Let $\p_1 := -1 + 2 i, \p_2 := -127, \p_3 := -7+12i, \q_1 := -103,$ and $\q_2 := 9 - 4i.$ The Gaussian quartic residue symbols between these primes are encoded in $G_b$:
\begin{figure}[H]
\noindent
\begin{minipage}[t]{0.47\linewidth}
  \vspace{12pt}
  \raggedright
  \hspace*{2em}We use black edges for weight 1, green for weight 2, and orange for weight 3. The vertices in $V^{(1,3)}$ are blue. The vertices in $V^{(2)}$ are red. \\
  
  \hspace*{2em}Note that in this case, the graph $G_b$ is undirected. This follows from quartic reciprocity (see Remark~\ref{quartic-rec-rem}), since $n_\v \equiv 0 \modd 2)$ for all primary $\v \mid b$ except $\v = -1 + 2 i$ implies 
  $$
  \text{log}_i \LEG{\v_1}{\v_2} \eqq \text{log}_i \LEG{\v_2}{\v_1}
  $$
  for all $\v_1, \v_2 \in \{\p_1, \p_2, \p_3, \q_1, \q_2\}.$
\end{minipage}%
\hfill
\begin{minipage}[t]{0.48\linewidth}
  \vspace{0pt}
  \centering
  \includegraphics[width=\linewidth]{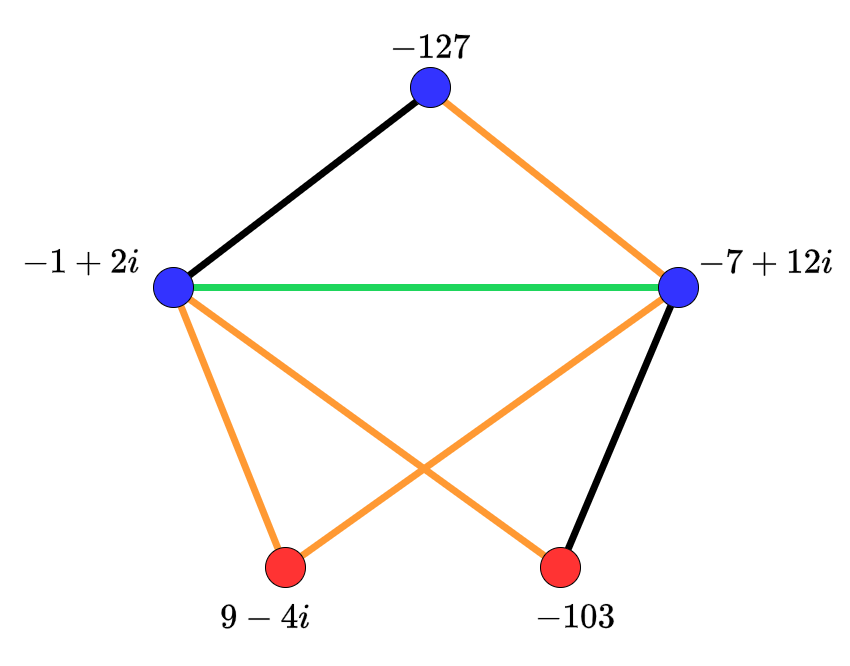}
\end{minipage}
\label{Ex1-fig}
\end{figure}
 The values for each $m_\v, n_\v \modd 4)$ are:
        \bal 
        m_{-1+2i} &\equiv 2, \quad &&n_{-1+2i} \equiv 1,\\
        m_{-127} &\equiv 0, \quad &&n_{-127} \equiv 0,\\
        m_{-7+12 i} &\equiv 3, \quad &&n_{-7+12i} \equiv 0,\\
        m_{-103} &\equiv 2, \quad &&n_{-103} \equiv 0,\\
        m_{9-4i} &\equiv 3, \quad &&n_{9-4i} \equiv 0.
        \nal 
        
        We now determine which modifications need to be made to $L(G_b)$ in order to construct the matrix $L_b'$. Since
        \bal 
        \text{deg}^{(1,3)}(\q_1) = 4 \eq 0 \eq m_{\q_1} t_b + n_{\q_1} s_b,\\
        \text{deg}^{(1,3)}(\q_2) = 9\eq 1 \overset{(2)}{\not\equiv} m_{\q_2} t_b + n_{\q_2} s_b,
        \nal 
        we must delete the $\q_2$ row and column from $L(G_b)$ to form $L_b^{(2)}$. Next, since 
        \bal 
        \text{deg}^{(2)}(\p_1) \eq 0 \overset{(2)}{\not\equiv} m_{\p_1}t_b + n_{\p_1} s_b, \\
        \text{deg}^{(2)}(\p_2) \eq 1\overset{(2)}{\not\equiv} m_{\p_2}t_b + n_{\p_2} s_b, \\
        \text{deg}^{(2)}(\p_3) \eq 0 \eq m_{\p_3}t_b + n_{\p_3} s_b, 
        \nal 
we must add $1$ to the $\p_1$-th and $\p_2$-th diagonal entries of $L_b^{(2)}$ to form $L_b^{(3)}$. Finally, since 
\bal 
\text{deg}^{(1)}(\q_1) + 3 \text{deg}^{(3)} (\q_1) = 6 \eqq 2 \eqq m_{\q_1} t_b + n_{\q_1} s_b + 2 ( n_{\q_1} + 1),
\nal 
we have $L_b' = L_b^{(3)}$. If we choose the ordering $(\p_1, \p_2, \p_3, \q_1)$, then $L_b' \in \text{Mat}_{4}(\F_2)$ is  
\bal L'_b \eq
\begin{pmatrix}
    \text{deg}(\p_1) +1& \edge{\p_1}{\p_2}& \edge{\p_1}{\p_3}& \edge{\p_1}{\q_1}\\
    \edge{\p_2}{\p_1}& \text{deg}(\p_2) +1& \edge{\p_2}{\p_3}&\edge{\p_2}{\q_1}\\
    \edge{\p_3}{\p_1}& \edge{\p_3}{\p_2}& \text{deg}(\p_3)& \edge{\p_3}{\q_1}\\
    \edge{\q_1}{\p_1}& \edge{\q_1}{\p_2}& \edge{\q_1}{\p_3}& \text{deg}(\q_1)
\end{pmatrix} \eq \begin{pmatrix}
    0& 1& 0& 1\\
    1& 0& 1&0\\
    0& 1& 1& 1\\
    1& 0& 1& 0
\end{pmatrix}.
\nal 
Now we can determine the solutions to the matrix equations $L_b' \cdot \vecd = \vecyst$. 
Note that
\bal 
\vec{y}^{\hspace{.05cm}(0,0)} = \begin{pmatrix}0\\ 0\\0\\0 \end{pmatrix},  \quad \vec{y}^{\hspace{.05cm}(1,0)} = \begin{pmatrix}1\\ 0\\0\\0 \end{pmatrix}, \quad \vec{y}^{\hspace{.05cm}(0,1)} = \begin{pmatrix}0\\ 0\\1\\0 \end{pmatrix}, \quad \vec{y}^{\hspace{.05cm}(1,1)} = \begin{pmatrix}1\\ 0\\1\\0 \end{pmatrix}.
\nal 
Thus, we have
\bal 
L_b' \cdot \vecd = \vec{y}^{\hspace{.05cm}(0,0)} &\iff \vecd \in \{ (0,0,0,0), (0,1,0,1)\} \iff d \in \{1,\  \p_2 \q_1\},\\
L_b' \cdot \vecd = \vec{y}^{\hspace{.05cm}(1,0)} &\iff \vecd \in \{(1,1,1,0), ( 1,0,1,1)\}\iff d \in \{i \p_1 \p_2 \p_3, \ i \p_1 \p_3 \q_1\},\\
L_b' \cdot \vecd = \vec{y}^{\hspace{.05cm}(0,1)} &\iff \vecd \in \{(1,0,1,0), ( 1,1,1,1)\}\iff d \in \{\t \p_1 \p_3, \ \t \p_1 \p_2 \p_3 \q_1\},\\
L_b' \cdot \vecd = \vec{y}^{\hspace{.05cm}(1,1)} &\iff \vecd \in \{(0,1,0,0), ( 0,1,0,1)\}\iff d \in \{i \t \p_2,\  i \t \p_2 \q_1\}.
\nal 
Lastly, we must determine which of these $d$ also satisfies condition \ref{main-thm-lsc-at-t} of Theorem \ref{main-thm}. It is easily checked that $d \in \{1, \p_2 \q_1\}$ satisfy \ref{thm-LSC-B}, $d \in \{i \p_1 \p_2 \p_3, i \p_1 \p_3 \q_1\}$ satisfy \ref{thm-LSC-A}, and $d \in \{\t \p_1 \p_3, \t \p_1 \p_2 \p_3 \q_1, i \t \p_2, i \t \p_2 \q_1\}$ do not satisfy \ref{thm-LSC-A}, \ref{thm-LSC-B}, or \ref{thm-LSC-C}. Thus, by Theorem \ref{main-thm}, 
\bal 
\SelEQ = \{ 1, \ \p_2 \q_1, \  i \p_1 \p_2 \p_3, \ i \p_1 \p_3 \q_1 \}. 
\nal 
\subsection{The mod 2 reduction of \texorpdfstring{$G_b$}{Gb}}\label{subsec-mod-2-reduction} In this subsection, we show that in some cases, the $\varphi$-Selmer group of $E_b$ depends only on a simple, undirected graph, rather than the weighted, directed graph $G_b$. Combining Remark \ref{rem-simplified-algorithm} with Corollary \ref{cor-simplified-alg-mod-1} yields the following corollary. 
\begin{cor}\label{cor-simplified-b-square-or-squarefree} If $b \in \Z[i]$ is fourth-power-free and in either of the following forms:
\begin{align}
b &= i^{s_b} \t^{t_b} \p_1^{r_1} \cdots \p_N^{r_N} \quad \text{with } s_b, t_b \in \{0,1,2,3\},
r_i \in \{1,3\} \text{ and each }\p_i \equiv 1 \modd \t^7), \label{nice-b-form-square-free}\\
 b &= i^{s_b} \t^{t_b} \q_1^2 \cdots \q_N^2 \quad \text{with } s_b, t_b \in\{0,2\}\text{ and each }\q_j \equiv 1 \modd \t^7),\label{nice-b-form-square}
\end{align}
and $R := \text{rk}_{\F_2}(L(G_b))$, then
 \bal 
\#\text{S}^{(\varphi)}(E_b/\Q(i)) = \begin{cases}
2^{N-R+2} &\text{if } (s_b, t_b) \in \{(0,1),\ (3,1)\},\\
2^{N-R+1} &\text{else}.
\end{cases}
\nal 
\end{cor}
Thus, when $b$ is of the form (\ref{nice-b-form-square-free}) or (\ref{nice-b-form-square}), the size of the $\varphi$-Selmer group is determined by the number of divisors of $b$ and the \textit{binary rank}  $\text{rk}_{\F_2}(L(G_b))$ of the Laplacian matrix $L(G_b)$. Since $\text{rk}_{\F_2}(L(G_b)) = \text{rk}_{\F_2}(L_b^{(1)})$, this binary rank is the same as the binary rank of $L(\overline{G_b})$, where $\overline{G_b}$ is the simple, undirected graph defined as follows.
\begin{defn}\label{defn-mod-2-Gb} If $b\in \Z[i]$ has primary factorization $b = i^{s_b} \t^{t_b} \p_1^{r_1} \cdots \p_M^{r_M} \q_1^2 \cdots \q_N^2$, then the \textit{mod 2 reduction of $G_b$}, denoted by $\overline{G_b}$, is the simple, undirected graph with:
\bal 
\text{Vertices of }\overline{G_b} = V(\overline{G_b}) &:= \{ \p_1, \dots, \p_M, \q_1, \cdots, \q_N\},\\
\text{Edges of }\overline{G_b} = E(\overline{G_b})&:= \bigg\{ v \leftrightarrow w: \LEGGG{v}{w} = -1\bigg\}.
\nal 
\end{defn}
Note that $\overline{G_b}$ has the same vertices as $G_b$ and $v \leftrightarrow w \in E(\overline{G_b})$ if and only if $ \edge{v}{w} \equiv 1 \modd 2)$. This implicitly implies $\edge{v}{w} \equiv \edge{w}{v} \modd 2) \text{ for all }v,w$, which follows from quartic reciprocity (Remark \ref{quartic-rec-rem}).
\begin{rem}\label{remark-bicycles} The binary rank of the Laplacian matrix of an undirected, simple graph $G$ is related to the number of \textit{bicycles} (see \cite[Section 14.15]{godsil2013algebraic} for the definition) and \textit{connected components} of $G$. If $G$ has $N$ vertices, $c$ connected components, and $B$ bicycles, then 
\begin{align}\label{bicycle-rank}
    \text{rk}_{\F_2} L(G) = N - c -\text{log}_2 B
\end{align}
by \cite[Lemma 14.15.3]{godsil2013algebraic}. Moreover, if $G$ is connected, then \cite[Theorem 14.15.4]{godsil2013algebraic} states that $B=1$ if and only if the number of \textit{spanning trees} of $G$ is odd, where a spanning tree is an acyclic subgraph of $G$ which contains all of its vertices.
\end{rem}

\begin{cor}\label{cor-bicycles} If $b \in \Z[i]$ is fourth-power-free and either of the form (\ref{nice-b-form-square-free}) or (\ref{nice-b-form-square}), then 
 \bal 
\#\text{S}^{(\varphi)}(E_b/\Q(i)) = \begin{cases}
2^{c + \log_2 B +2} &\text{if } (s_b, t_b) \in \{(0,1),\ (3,1)\},\\
2^{c + \log_2 B+1} &\text{else},
\end{cases}
\nal 
where $c$ is the number of connected components of $\overline{G_b}$ and $B$ is the number of bicycles of $\overline{G_b}$. Therefore, $\text{dim}_{\F_2} S^{(\varphi)}(E_{b}/\Q(i)) \geq 2$ with equality if and only if $\overline{G_b}$ is connected and the number of spanning trees of $\overline{G_b}$ is odd, in which case $\text{rk}( E_{b}(\Q(i))) \leq 2.$
\end{cor}
\subsection{\texorpdfstring{The $\varphi$-Selmer group when $b$ is a product of inert primes}{The phi-Selmer group when b is a product of inert primes}}\label{subsec-inert} Between any two inert primary primes $v$ and $w$, we always have $\LEG{v}{w}=1$ by (\ref{quartic-rationals}). Consequently, if all the odd primes dividing $b$ are inert, then $G_b$ has no edges (i.e., $\edge{v}{w} = 0$ for all $v,w$). In this case, it is straightforward to apply Theorem \ref{main-thm} to determine $\text{S}^{(\varphi)}(E_b/\Q(i))$. We will leverage the following lemma, which can easily be checked using Remark \ref{rem-mv-nv}.

\begin{lem}\label{lem-inert-primes} Suppose $\p \in \Z[i]$ is an inert, primary prime which lies over the rational prime $p \in \Z$. Then $p = - \p \equiv 3 \modd 4)$ and 
\bal 
p \equiv 3 \modd 16) &\iff (m_\p, n_\p) \equiv (3,2) \modd 4),\\
p \equiv 7 \modd 16) &\iff (m_\p, n_\p) \equiv (2,0) \modd 4),\\
p \equiv 11 \modd 16) &\iff (m_\p, n_\p) \equiv (1,2) \modd 4),\\
p \equiv 15 \modd 16) &\iff (m_\p, n_\p) \equiv (0,0) \modd 4).
\nal 
\end{lem}

\begin{rem}\label{rem-td-0}
From Theorem~\ref{main-thm}, observe that if \( t_b = 0 \), then any square-free divisor \( d \mid 2b \) that satisfies LSC at \(1+i\) \ref{main-thm-lsc-at-t} must also satisfy \( t_d = 0 \).
\end{rem}
\begin{thm}\label{thm-inert} Let $b\in \Z[i]$ be fourth-power-free so that it can be factored as in (\ref{b-factorization}). Suppose each $\p_i$ and  $\q_j$ is inert, lying over rational primes $p_i$ and $q_j$, respectively. For $k \in \{3,7,11,15\}$ and $l \in \{1,3\}$, define $M_k, M_{k}^{(l)},N_k \in \Z_{\geq 0}$ as follows
\bal 
M_k &:= \# \{\p_i: p_i \equiv k \modd 16)\},\\
M_k^{(l)} &:= \# \{ \p_i \in M_k: r_i = l\},\\
 N_k &:= \# \{\q_j: q_j \equiv k \modd 16)\}.
\nal 
Then we have the following description of the $\varphi$-Selmer group of $E_b$ over $\Q(i)$.
\begin{enumerate}
    \item If $t_b$ is odd, then $\SelEE \cong (\Z/2\Z)^{M_7+ M_{15}+ N_7 +c}$, where $c \in \{1,2\}$ is given by 
    \bal 
    c = \begin{cases}
        2 &\text{ if } (s_b,t_b) \in \{(0,1), \ (3,1) \},\\
        1 &\text{else}.
    \end{cases}
    \nal 
  \item If $(s_b,t_b) \in \{(1,0), \ (3,0), \ (0,2), \ (2,2)\}$, then $\SelEE \cong (\Z/2\Z)^{M + N_7 + N_{15} + c}$, where $c \in \{-1,0,1\}$ is given by
  \bal 
c = \begin{cases}
    -1 & \text{if } t_b = 2 \text{ and }M_3 + M_{11} \text{ is positive and even},\\
    0 & \text{if } t_b = 2 \text{ and either } M_3 + M_{11} \text{ is odd, or } M_3 +M_{11} = 0\text{ and } M_7 \text{ is odd}, \\
    1 & \text{else}.
\end{cases}
\nal 
\item If $(s_b, t_b) \in \{(0,0), \ (2,0), \ (1,2), \ (3,2)\}$, then $\SelEE \cong (\Z/2\Z)^{M+N + c}$, where $c \in \{0,1,2\}$ is given by 
\bal 
c = \begin{cases}
    2 &\text{if } s_b = 1 \text{ and }M_3 = M_{11} = N_3 = N_{11} = 0,\\
    1 &\text{if } s_b = 3,\\
    1 &\text{if } s_b = 0 \text{ and }M_3 + M_{11} \text{ is even},\\
    1 &\text{if }s_b =2 \text{ and } M_3^{(3)} + M_{11}^{(1)} + 2( M_7 + N_3 + N_{11}) + 3 (M_3^{(1)} + M_{11}^{(3)}) \equiv 0, 3 \modd 4),\\
    0 &\text{ else}
\end{cases}
\nal 
\end{enumerate}
\end{thm}
\begin{proof} In each case, we will first determine which $d$ satisfy $L_b' \cdot \vecd = \vecyst$. We will then check which of these $d$ additionally satisfy LSC at $1+i$ \ref{main-thm-lsc-at-t}. By the inertness hypothesis, $\text{deg}^{(1,3)}(\v) = \text{deg}^{(2)}(\v) =0$ for all primary $\v\mid b$. By Lemma \ref{lem-inert-primes}, each $n_\v$ is even, and so $(\vecyst)_v \equiv m_v t_d \modd 2)$ for all $v$. Moreover, $b_0 \equiv i^{s_b} (-3)^{M_3 + M_{11}} \modd \t^5)$ by Remark \ref{rem-mv-nv} and Lemma \ref{lem-inert-primes}. \vspace{.5\baselineskip}
% By Lemma \ref{lem-inert-primes}, $n_\v \equiv 0 \modd 2)$ for all primary $\v$ and $m_b \equiv M_3 + M_7 \modd 2)$; hence, 
% \begin{align}\label{inert-b0-mod5}
% b_0 \equiv i^{s_b} (1-4i)^{M_3 + M_7} \modd \t^7).
% \end{align}

\noi (1) Suppose $t_b$ is odd. Then $L_b^{(2)}$ is obtained by deleting the $\q_j$-th rows and columns for which $m_{\q_j} \equiv 1 \modd 2)$ (i.e., $q_j \equiv 3 \modd 8)$) and  $L_b' = L_b^{(2)} + \text{diag}(\vec{\delta}\hspace{.05cm}) = \text{diag}(\vec{\delta}\hspace{.05cm})$, where 
\bal 
\delta_v := \begin{cases}
   1 &\text{if }v = \p_i \text{ and }p_i \equiv 3 \modd 8),\\
   1 &\text{if }v = \q_j \text{ and }q_j \equiv 7 \modd 16),\\
   0 &\text{else.}
\end{cases}
\nal 
Thus, $L_b' \cdot \vecd = \vec{y}^{\hspace{.05cm}(s_d, 0)} = \vec{0}$ if and only if 
\bal 
\p_i \nmid d \text{ for all }p_i \equiv 3 \modd 8) \quad \text{and}\quad \q_j \nmid d \text{ for all }q_j \equiv 7 \modd 16),
\nal 
while $L_b' \cdot \vecd = \vec{y}^{\hspace{.05 cm} (s_d, 1)} = (m_v)_v$ if and only if 
\bal 
\p_i \mid d \text{ for all }p_i \equiv 3 \modd 8) \quad \text{and}\quad \q_j \nmid d \text{ for all }q_j \equiv 7 \modd 16).
\nal 
Therefore, a square-free divisor $d \in \Z[i]$ of $2b$ satisfies LSC away from $1+i$ if and only if 
\bal 
d = i^{s_d}  \prod_{\v \in S} \v \quad \text{ or }\quad d = i^{s_d} \ \t \prod_{\substack{p_i \equiv 3\\ (\text{mod 8})}} \hspace{-.2cm}  \p_i \;\prod_{\v \in S} \v
\nal 
for some $s_d\in \{0,1\}$ and $S \subseteq \{ \p_i: p_i \equiv 7 \modd 8)\} \cup \{ \q_j: q_j \equiv  15 \modd 16)\}$. Note that $d_0 \equiv i^{s_d} \modd \t^5)$ in the first case and $d_0 \equiv i^{s_d} (-3)^{M_3 + M_{11}}\modd \t^5)$ in the second case. It follows that in both cases, the LSC at $1+i$ constraint \ref{main-thm-lsc-at-t} always holds for $s_d = 0$, and only holds for $s_d=1$ when $(s_b, t_b) \in \{(0,1), \ (3,1)\}$. Claim (1) of the theorem follows.\\

When $t_b$ is even, $L_b^{(3)} = L_b^{(1)}$ and $L_b'= \text{diag}(\vec{\delta}\hspace{.05cm})$, where $\delta_{\p_i} = 0$ and $\delta_{\q_j} = 1$ if and only if 
\begin{align}\label{inert-diag-condition}
0 \equiv m_{\q_j} t_b + n_{\q_j} s_b + 2 (n_{\q,j} +1) \modd 4).
\end{align}
By Lemma \ref{lem-inert-primes}, $n_{\q_j} \equiv 2 m_{\q_j}\modd 4)$; hence, (\ref{inert-diag-condition}) is equivalent to $t_b \equiv 2(s_b+1) \modd 4)$ if $m_{\q_j}$ is odd, and is impossible if $m_{\q_j}$ is even.\\

\noi (2) Suppose $t_b$ is even and $t_b \equiv 2 (s_b + 1) \modd 4)$; i.e., $(s_b, t_b) \in \{(1,0), \ (3,0), \ (0,2), \ (2,2) \}.$ Then $L_b' = \text{diag}(\vec{\delta}\hspace{.05cm})$, where $\delta_{\p_i} = 0$ for all $\p_i$ and $\delta_{\q_j} =1$ if and only if $q_j \equiv 3 \modd 8).$ Thus, $L_b' \cdot \vecd = \vec{y}^{\hspace{.05cm}(s_d, 0)} = \vec{0}$ if and only if $\q_j \nmid d$ for all $q_j \equiv 3 \modd 8)$. If instead $t_d = 1$, note that 
\begin{align}\label{inert-pi-diag-condition}
p_i \equiv 3 \modd 8) \implies (L_b' \cdot \vecd)_{\p_i} = 0 \neq 1 \eq m_{\p_i} = ( \vec{y}^{\hspace{.05cm}(s_d, 1)})_{\p_i}.
\end{align}
Therefore, $L_b' \cdot \vecd = \vec{y}^{\hspace{.05cm}(s_d, 1)}$ is impossible unless $M_3 + M_{11} = 0$. If $M_3 + M_{11} = 0$, then 
\bal 
L_b' \cdot \vecd = \vec{y}^{\hspace{.05cm}(s_d, 1)} \iff \q_j\mid d \text{ for all }q_j \equiv 3 \modd 8). 
\nal 
\noi (2.1) Suppose $(s_b, t_b) \in \{(1,0), \ (3,0)\}$, so that $t_d = 0$ by Remark \ref{rem-td-0}. Then a square-free divisor $d \mid 2b$ satisfies LSC away from $1+i$ if and only if 
\bal 
d = i^{s_d} \prod_{\v \in S} \v \quad \text{for some $s_d \in \{0,1\}, \ S \subset \{\p_i\} \cup \{\q_j: q_j \equiv 7 \modd 8)\}.$}
\nal 
For each $(s_b, m_{b,0}, m_{d,0}) \in \{1,3\} \times \{0,1\} \times \{0,1\}$, $b_0 \equiv i^{s_b} (-3)^{m_{b,0}}$ and $d_0 \equiv i^{s_d} (-3)^{m_{d,0}} \modd \t^5)$ satisfy either \ref{thm-LSC-A} (if $s_d = 1$) or \ref{thm-LSC-B} (if $s_d = 0$). Hence, each of the $2^{M + N_7 + N_{15} + 1}$ many $d$ which satisfy LSC away from $1+i$ also satisfy LSC at $1+i$.\\
% We have $b_0 \equiv i^{s_b} (-3)^{m_{b,0}}$ and $d_0 \equiv i^{s_d} (-3)^{m_{d,0}} \modd \t^5)$ for some $m_{b,0},m_{d,0} \in \{0,1\}$. For each pair $(m_{b,0}, m_{d,0}) \in \{0,1\}^2$, $b$ and $d$ satisfy \ref{thm-LSC-A} for both $s_d \in \{0,1\}$.  Hence, each of the $2^{M + N_7 + N_{15} + 1}$ possible $d$ are in $\SelEE$. \\

\noi (2.2) Suppose $(s_b, t_b) \in \{(0,2), \ (2,2)\}$ and $M_3 +  M_{11}>0$. Then a square-free divisor $d \mid 2b$ satisfies LSC away from $1+i$ if and only if 
\bal 
d = i^{s_d} \prod_{\v \in S} \v \quad \text{for some $s_d \in \{0,1\}, \ S \subset \{\p_i\} \cup \{\q_j: q_j \equiv 7 \modd 8)\}.$}
\nal 
Such a $d$ satisfies LSC at $1+i$ if and only if $s_d = 0$ and $(m_b, m_d) \not\equiv (0,1) \modd 2).$ The result follows because 
$$
m_b \eq M_3 + M_{11} \quad \text{and}\quad m_d \eq \# \{ \p_i \in S: p_i \equiv 3 \modd 8)\}.
$$

\noi (2.3) Suppose $(s_b, t_b) \in \{(0,2), \ (2,2)\}$ and $M_3 +  M_{11}=0$. Then a square-free divisor $d \mid 2b$ satisfies LSC away from $1+i$ if and only if 
\bal 
d = i^{s_d}  \prod_{\v \in S} \v \quad \text{ or }\quad d = i^{s_d} \ \t \prod_{\substack{q_j \equiv 3\\ (\text{mod 8})}} \hspace{-.2cm}  \q_j \;\prod_{\v \in S} \v 
\nal 
for some $s_d\in \{0,1\}$ and $S \subseteq \{ \p_i\} \cup \{\q_j: q_j \equiv 7 \modd 8)\}.$ In the first case, $m_b \equiv m_d \equiv 0 \modd 2)$ as $M_3 = M_{11} = 0$. Then, since $t_b = 2$ and $t_d = 0$, $d$ satisfies \ref{main-thm-lsc-at-t} if and only if $s_d = 0$. In the second case, $d$ satisfies \ref{main-thm-lsc-at-t} if and only if $m_b \equiv  2 m_d \modd 4)$ and either $s_d = 1$ if $s_b = 0$, or $s_d = 0$ if $s_b = 2$. The result follows because $m_b \equiv 2(M_7 + N_3 + N_{11}) \modd 4)$ and $m_d \equiv N_3 + N_{11} \modd 2)$ by Lemma \ref{lem-inert-primes}.\\

\noi (3) Suppose $(s_b,t_b) \in \{(0, 0), \ (2, 0), \ (1, 2),\  (3, 2)\}$. Then (\ref{inert-diag-condition}) never holds, so $L_b' = L_b^{(1)} = 0$. \\

\noi (3.1) Suppose $(s_b,t_b) \in \{(1,2), (3,2)\}$. First consider $t_d = 0$; a square-free divisor $d \mid 2b$ satisfies LSC away from $1+i$ with $t_d = 0$ if and only if 
\bal 
d = i^{s_d} \prod_{\v \in S}\v \quad\text{for some }s_d \in \{0,1\}, \ S \subset \{\p_i\} \cup \{\q_j\}.
\nal 
If $(s_b, t_b, t_d) = (3,2, 0)$, then $d$ satisfies \ref{thm-LSC-A} (if $s_d = 1$) and $d$ satisfies \ref{thm-LSC-B} (if $s_d = 0$). If $(s_b, t_b, t_d) = (1,2,0)$, then $d$ satisfies \ref{main-thm-lsc-at-t} with $s_d = 0$ if and only if $m_d \equiv 0 \modd 2)$, and $d$ satisfies \ref{main-thm-lsc-at-t} with $s_d = 1$ if and only if $m_d \equiv m_b   \modd 2)$. If $m_b \equiv M_3 + M_{11}\modd 2)$ is even, then $d$ satisfies \ref{main-thm-lsc-at-t} (for both $s_d \in \{0,1\}$) if and only if $m_d \equiv 0 \modd 2)$. If $M_3 + M_{11} + N_3 + N_{11} >0$, then this cuts the possible subsets $S$ in half; otherwise, $m_d \equiv 0 \modd 2)$ is vacuous. If instead, $M_3 + M_{11}$ is odd, then $d$ satisfies \ref{main-thm-lsc-at-t} if and only if $s_d = 0 \equiv m_d \modd 2)$ or $s_d = 1 \equiv m_d \modd 2).$ 

Now suppose $t_d = 1$; a square-free $d \mid 2b$ satisfies LSC away from $1+i$ with $t_d = 1$ if and only if $L_b' \cdot \vecd = \vec{0} = \vec{y}^{\hspace{.05cm}(s_d,1)} = (m_v)_v$. This is equivalent to $m_v \equiv 0 \modd 2)$ for all $v \mid b$; i.e., $M_3 = M_{11} = N_3 = N_{11} = 0$ by Lemma \ref{lem-inert-primes}. In this case, $d = i^{s_d} \t \prod_{\v \in S} \v$ for some $S \subset \{\p_i\} \cup \{ \q_j\} = \{\p_i: p_i \equiv 7 \modd 8\} \cup \{ \q_j: q_j \equiv 7 \modd 8)\}$. This implies $m_b \equiv m_d \equiv 0 \modd 2),$ which implies $d$ satisfies LSC at $1+i$ for both $s_d \in \{0,1\}$.\\

\noi (3.2) Suppose $(s_b,t_b) \in \{(0,0), (2,0)\}$, so that $t_d = 0$ by Remark \ref{rem-td-0}. Then $L_b' \cdot \vecd = \vec{y}^{\hspace{.05cm} (s_d,0)} = \vec{0}$ for all $\vecd$; hence, a square-free divisor $d \mid 2b$ satisfies LSC away from $1+i$ if and only if 
\bal 
d = i^{s_d} \prod_{\v \in S}\v \quad\text{for some }s_d \in \{0,1\}, \ S \subset \{\p_i\} \cup \{\q_j\}.
\nal 
If either $s_b = 0$ and $m_b \equiv 0 \modd 2)$, or $s_b = 2$ and $m_b \equiv 0,3 \modd 4)$, then $d$ satisfies \ref{main-thm-lsc-at-t} for both $s_d \in \{0,1\}$. Otherwise, $d$ satisfies \ref{main-thm-lsc-at-t} if and only if $s_d = 0$. The result follows since
\bal 
m_b \equiv  M_3^{(3)} + M_{11}^{(1)} + 2( M_7 + N_3 + N_{11}) + 3 (M_3^{(1)} + M_{11}^{(3)}) \modd 4).
\nal 
\end{proof}

\subsection{\texorpdfstring{Infinite families of $E_b/\mathbb{Q}(i)$ with rank 0}{Infinite families of Eb/Q(i) with rank 0}}\label{subsec-rank0-families} In this subsection, we construct several subfamilies of curves $E_b/ \Q(i)$ for which $\SelEE \cong \Z/2\Z$. From (\ref{rank-bound}), it follows that each of these curves has rank 0. By the Chebotarev density theorem, each of the three families constructed below contains infinitely many elliptic curves which are not isomorphic over $\Q(i)$. Theorem \ref{thm-inert} yields the first two families.

\begin{cor}\label{cor-rank0-tb1} If $b = i^{s_b} (1+i)^{3} p_1 ^{r_1} \cdots p_M ^{r_M} q_1^2 \cdots q_N^2 \in \Z[i]$ for some $s_b \in \Z, r_i \in \{1,3\},$ and rational primes $p_i, q_j$ satisfying $p_i \equiv 3 \modd 8)$ and $q_j \equiv 3, 11, 15 \modd 16)$ for all $i, j$, then 
\bal 
E_b(\Q(i)) \cong \Z/2\Z.
\nal 
\end{cor} 
\begin{proof} We have $\SelEE \cong \Z/2\Z$ by Theorem \ref{thm-inert} (1), since $t_b = 3$ and $M_7 = M_{15} = N_7 = 0$. Thus, $\text{rk}(E_b/\Q(i)) = 0$ by (\ref{rank-bound}). The $\Q(i)$-torsion is given by Theorem \ref{thm-torsion}.
\end{proof}
\begin{cor}\label{cor-rank0-tb2} If $b = \pm (1+i)^2 p_1^{r_1} p_2^{r_2} q_1^2 \cdots q_N^2$ for some $r_1, r_2 \in \{1,3\}$ and rational primes $p_1, p_2, q_1 ,\dots, q_N$ congruent to 3 modulo 8, then
\bal 
E_b(\Q(i)) \cong \Z/2\Z.
\nal 
\end{cor} 
\begin{proof} We have $\SelEE \cong \Z/2\Z$ by Theorem \ref{thm-inert} (2), since $s_b \in \{0,2\}$, $t_b = 2$, and $M + N_7 + N_{15} = M_3 + M_{11} = 2$. Thus, $\text{rk}(E_b/\Q(i)) = 0$ by (\ref{rank-bound}). By Theorem \ref{thm-torsion}, $E_b(\Q(i))_{\text{tors}} = \Z/2\Z$. 
\end{proof}
The following family of elliptic curves $E_b$ with trivial rank is obtained by choosing $b$ such that every prime is deleted in the construction of $L_b'$.

\begin{cor}\label{cor-rank0-emptyLb} If $b = i^{s_b} (1+i)^{t_b} \q_1^2 \cdots \q_N ^2 \in \Z[i]$ for some odd $s_b, t_b \in \Z_{>0}$ satisfying $(s_b, t_b) \not\equiv (3,1) \modd 4)$ and primary Gaussian primes $\q_j$ satisfying
\begin{align}\label{qj-condition}
\q_j \equiv -3 \modd \t^5) \quad \text{ or } \quad  \q_j \equiv -1 + 2 i \modd \t^5), 
\end{align}
then
$$E_b (\Q(i)) \cong \Z/2\Z.$$
\end{cor}
\begin{proof} The hypothesis (\ref{qj-condition}) is equivalent to $m_{\q_j} + n_{\q_j} \equiv 1 \modd 2)$, and so $L_b' = L_b^{(2)} = \emptyset$ since $s_b$ and $t_b$ are odd. Then, by Theorem \ref{main-thm}, a square-free divisor $d\mid 2b$ satisfies LSC away from $1+i$ if and only if $d = i^{s_d} \t^{t_d}$ for some $s_d, t_d \in \{0,1\}$. Finally, since $b_0 \equiv i^{s_b} \modd \t^5)$ and $(s_b, t_b) \not\equiv (3,1) \modd 4)$, such a $d$ satisfies LSC at $1+i$  \ref{main-thm-lsc-at-t} if and only if $d \in \{1, i \t\}$. Thus, $\SelEE \cong \Z/2\Z$. The final statement follows from (\ref{rank-bound}) and Theorem \ref{thm-torsion}.
\end{proof}

\bibliographystyle{amsalpha}
\bibliography{main}
\end{document}